\documentclass[12pt,reqno,final
]{amsart}
\usepackage[left=1.12in,right=1.25in]{geometry}
\usepackage{amsfonts,amssymb,amsmath}
\usepackage
{graphicx}
\usepackage[comma, authoryear]{natbib}
\usepackage[colorlinks]{hyperref}

\usepackage{xspace,colortbl}
\usepackage{color}



\usepackage[color,notref,notcite]{showkeys}
\usepackage{enumerate}
\definecolor{labelkey}{rgb}{0.6,0,1}



\newcommand{\inw}[1]{\stackrel{\circ}{#1}}

\def\calf{\mathcal{F}}

\def\R{\mathbb{R}}

\def\A{\mathcal{A}}

\def\L{\mathcal{L}}

\def\I{\mathbb{I}}

\def\cH{\mathcal{H}}

\numberwithin{equation}{section}
\newtheorem{thm}{Theorem}[section]
\newtheorem{lem}[thm]{Lemma}

\newtheorem{prop}[thm]{Proposition}
\newtheorem{RM}[thm]{Remark}

\newtheorem{algo}{Algorithm}




\makeatother

\begin{document}

\title[Option pricing and hedging for RSGBM models]{Option pricing and hedging for regime-switching geometric Brownian
motion models}

\author{Bruno R\'{e}millard}

\address{CRM, GERAD, and Department of Decision Sciences, HEC Montr\'{e}al, 3000 chemin de la C\^{o}te Sainte-Catherine,
Montr\'{e}al (Qu\'{e}bec) Canada H3T 2A7}

\email{bruno.remillard@hec.ca}

\author{Sylvain Rubenthaler}

\address{Laboratoire J. A. Dieudonn\'{e}, Universit\'{e} Nice Sophia Antipolis,
Parc Valrose, 06108 Nice cedex 2, France}

\email{rubentha@unice.fr}

\thanks{Partial funding in support of this work was provided by the Natural
Sciences and Engineering Research Council of Canada. The authors thank Hugo Lamarre for his helpful suggestions.}
\begin{abstract}
We find the variance-optimal equivalent martingale measure
when multivariate assets are modeled by a  regime-switching geometric Brownian motion, and the regimes are represented by a homogeneous continuous time Markov chain. Under this new measure, the Markov chain driving the regimes is no longer homogeneous, which differs  from the equivalent martingale measures usually proposed in the literature.
 We show the solution minimizes the mean-variance hedging error  under the objective measure.  As argued by \citet{Schweizer:1996}, the variance-optimal equivalent measure naturally extends canonical option pricing results to the case of an incomplete market and the expectation under the proposed measure may be interpreted as an option price. Solutions for the option
value and the optimal hedging strategy are easily obtained from Monte Carlo simulations. Two applications are considered.
\end{abstract}

\keywords{Hedging error, mean-variance, option pricing, Brownian motion, regime-switching, variance-optimal measure}

\maketitle


\section{Introduction}
\setcounter{equation}{0}

Recently, regime-switching geometric Brownian motion
(RSGBM) models have been the subject of much attention. They offer more flexibility than geometric Brownian motion models in capturing reported asset dynamics features such as time-varying volatility and high-order moments.

When the market is  incomplete, as  is the case for RSGBM models,
there are infinitely many  equivalent martingale measures to choose from, all free of arbitrage opportunities. Hence, arbitrage considerations alone cannot identify the measure and one
has then to choose the ``best'' martingale measure according to some  other
criterion.

In many articles, e.g., \citet{Guo:2001} and
\citet{Shen/Fan/Siu:2014} to name a few,  options are evaluated under a parametrized family of martingale measures which are typically calibrated to historical option prices for empirical applications.
 However, the class of martingale measures proposed so far   in the literature is quite limited. For example, in most
cases, the distribution of the hidden
regimes under the equivalent martingale  measure is either unchanged
or  remains a time homogeneous Markov chain.


 The main objective of this paper is to find  (1) an equivalent
martingale measure that is optimal in the sense of \citet{Schweizer:1996}, and  (2)  the   optimal hedging strategy.
More precisely, we find the equivalent signed martingale measure minimizing the variance of its density with respect to the objective measure, i.e.\ the so-called variance-optimal equivalent martingale measure. This first optimization problem is defined precisely in Section \ref{ssec:opt-var}, while the second optimization problem pertaining to the mean-variance hedging is defined in Section \ref{ssec:hedging}.
When prices are continuous, the variance-optimal measure is in fact a probability measure  and expectations can be interpreted as arbitrage-free prices arising from a hedging strategy minimizing a mean-variance criterion for the (discounted) hedging error.


Our work is related to \citet{Cerny/Kallsen:2007} who show the existence of a solution for the optimal hedging problem under a very large class of models.  For most cases of interest, their proposed solution is very difficult to find  in practice. So far only a few cases have been solved in continuous time,
including a very special stochastic volatility model
\citep{Cerny/Kallsen:2008a}.  For an implementable solution in
discrete time, see, e.g., \citet{Remillard/Rubenthaler:2013}



In the next section, we describe the model and state the hedging  problem. As in
\citet{Shen/Fan/Siu:2014}, the instantaneous interest rate may vary with  regimes.
Then in Section \ref{sec:emm}, we propose a change of
measure $\tilde P$, equivalent to the objective measure $P$, for which the discounted prices of  assets are  martingales. Under this martingale measure, the distribution of
regimes becomes a time inhomogeneous Markov chain. We also define the forward measure and give
representations for the associated option prices. The main results is that $\tilde P$ is  variance-optimal. Next, in Section
\ref{sec:hedging}, we define   a hedging strategy for the option
price associated with the change of measure and we show  that this
hedging strategy minimizes the mean-variance hedging error. Finally, in Section \ref{sec:application}, examples of
computations are given, based on simulations and Fourier transforms.

\section{Regime-switching model}\label{sec:model}

We first define the model and
then we state the hedging problem.

\subsection{Model}\label{ssec:model}

Let $\tau$ be a continuous time Markov chain on
$\{1,\ldots,l\}$, with infi\-ni\-tesimal generator $\Lambda$. In
particular, $P(\tau_{t}=j|\tau_{0}=i)=P_{ij}(t)$, where the
transition matrix $P$ can be written as $P(t)=e^{t\Lambda}$,
$t\ge0$. Then, the (continuous) price process $S$ is defined as the
solution of the stochastic differential equation
\begin{equation}
dS_{t}=D(S_{t})\mu(\tau_{t})dt+D(S_{t})\sigma(\tau_{t-})dW_{t},\label{eq:sde}
\end{equation}
where $D(s)$ is the diagonal matrix with diagonal elements
$(s_{j})_{j=1}^{d}$ and $W$ is a $d$-dimensional Brownian motion,
independent of $\tau$. We set $(\mathcal{F}_{t}^{\tau})_{t\geq0}$
for the filtration generated by $\tau$ and
$(\mathcal{F}_{t}^{W})_{t\geq0}$ for the filtration generated by
$W$. It is assumed that $a(i)=\sigma(i)\sigma(i)^{\top}$ is
invertible for any $i\in\{1,\ldots,l\}$.
If $T_{\alpha}$ represents the time of the $\alpha-th$ jump of the
regime process $\tau$, with $T_{0}=0$, then $S$ has the following
representation for any $t\in[T_{\alpha-1},T_{\alpha}]$, $\alpha\ge1$,
given $\tau_{T_{\alpha-1}}=i$:
\begin{equation}
S_{t}^{(j)}=S_{T_{\alpha-1}}^{(j)}e^{\upsilon^{(j)}(i)(t-T_{\alpha-1})+\sum_{k=1}^{d}\sigma_{jk}(i)\left(W_{t}^{(k)}-W_{T_{\alpha-1}}^{(k)}\right)},\quad j\in\{1,\ldots,d\},\label{eq:sdesol}
\end{equation}
where $\upsilon(i)=\mu(i)-\frac{1}{2}{\rm diag}\{a(i)\}$, and ${\rm
diag}(b)$ is the vector formed with the diagonal elements of the
matrix $b$. Because of \eqref{eq:sdesol}, such a process is called a
($d$-dimensional) regime-switching geometric Brownian motion with
parameters $\mu(i),a(i)$, $i \in \{1,\ldots,l\}$. Note that $S$ is
not a Markov process in general; however $(S,\tau)$ is a Markov
process.
One could also consider inhomogeneous Markov chains $\tau$, with an
infinitesimal generator depending on time. See Appendix
\ref{app:sim} for a possible construction.
One may  also define the law of $(S,\tau)$ through its
infinitesimal generator $\mathcal{H}$, defined  for all $f$ is in $C_{b}^{2}$ (the space of bounded continuous
functions with bounded and continuous derivatives of order one and
two) by
\begin{equation}\label{eq:L}
\mathcal{H}f(s,i)=\L_{i}f(s,i)+\sum_{j=1}^{l}\Lambda_{ij}f(s,j),
\end{equation}
where for each $i=1,\ldots,l$ , $\L_{i}$ is the infinitesimal generator
associated with the geometric Brownian motion with parameters $\mu(i),\sigma(i)$,
defined by
\[
\L_{i}f(s)=\mu(i)^{\top}D(s)\nabla f(s)+\frac{1}{2}\sum_{k=1}^{d}\sum_{j=1}^{d}a_{kj}(i)s_{k}s_{j}\partial_{s_{k}}\partial_{s_{j}}f(s).
\]
The main property of the infinitesimal generator $\L$ of a
Markov process $x_{t}$ that will be used is that
$f(x_{t})-\int_{0}^{t}\L f(x_{u})du$ is a martingale. For more
details, see, e.g., \citet{Ethier/Kurtz:1986}.
To be able to compare our results with those of \citet{Shen/Fan/Siu:2014}, we assume that the instantaneous interest
rate at time $t$ is given by $r_{\tau_{t-}}$, with $r\in [0,\infty)^{l}$. The  (random) discounting factor $B$ is
then given by
\begin{equation}\label{eq:discount}
B_{t}=e^{-\int_{0}^{t}r_{\tau_{u}}du}, \quad t\ge 0,
\end{equation}
so that $X_t = B_t S_t$ is the discounted price process.

We now describe the two optimization problems  we want to solve in this paper.

\subsection{Variance-optimal  martingale measure}\label{ssec:opt-var}
As defined in \citet{Schweizer:1996},
let $\mathcal{Q}$ be the set of all absolutely continuous signed martingale measures $Q$ with respect to $P$, having a square integrable density $Z_t = \left. \frac{dQ}{dP}\right|_{\calf_t}$, so that  for all $t\in [0,T]$, $E(Z_t)=1$, and $X_tZ_t$ is a $P$-martingale. Then $\tilde P \in \mathcal{Q}$ is variance-optimal if
$$
E\left[\left(\left.\frac{d\tilde P}{dP}\right|_{\calf_T}\right)^2 \right] = \inf_{Q\in \mathcal{Q}}E\left[\left(\left.\frac{d\tilde P}{dP}\right|_{\calf_T}\right)^2 \right].
$$
In \citet{Schweizer:1996}, it was shown that the variance-optimal measure exists, is unique and it is indeed a true probability measure if $X$ is continuous, which is the case here.

One of the main aim of this paper is to find the variance-optimal martingale measure for the RSGBM model. This is done in Lemma \ref{lem:changeofmeasure}.

\subsection{Hedging problem}\label{ssec:hedging}
The other motivation of the paper is to solve a quadratic hedging problem for the price process $S$. More precisely, let
 $\A$ be the class  of admissible strategies $\psi$ defined as
the $L^{2}(P)$ closure of (finite) linear combination of simple
admissible strategies $\psi$ of the form $\psi=Y\I_{(T_{1},T_{2}]}$,
where $T_{1},T_{2}$ are stopping times so that $0\le T_{1}\le
T_{2}\le T$, and $Y$ is a bounded random vector that is
$\calf_{T_{1}}$-measurable. Note that the class of admissible
strategies is a bit different from the one considered by
\citet{Cerny/Kallsen:2007}.  The (quadratic)
hedging problem for the regime-switching geometric Brownian motion is to minimize the discounted quadratic hedging error defined by
\begin{equation}\label{eq:problem}
\mathit{HE}(\pi,\psi) =  E\left[\left\{ B_{T}\Phi(S_{T})-\pi-\int_{0}^{T}\psi_{u}^{\top}dX_{u}\right\} ^{2}\right], \quad (\pi,\psi)\in\R\times\mathcal{A},
\end{equation}
where $X_t = B_t S_t$ is the discounted price process. Here, $\pi$ is the initial value of the portfolio, and $\pi+\int_{0}^{T}\psi_{u}^{\top}dX_{u}$ is its discounted value at time $t\ge 0$.

In the next section, we propose a candidate for the variance-optimal equivalent martingale measure. It was shown in \citet{Schweizer:1996} that for the variance-optimal equivalent martingale measure $\hat P$, if for a given $\Phi$, $(C_0,\varphi)$ minimizes \eqref{eq:problem}, then $C_0 = E^{\hat P}[B_T \Phi(S_T)]$.

\section{Change of measures and off-line computations}\label{sec:emm}

In order to be able to define the proposed change of measures, we need to introduce auxiliary deterministic functions. This is done next in Section \ref{ssec:offline}, after introducing first some notations.

For $i\in\{1,\ldots,l\}$, let $m(i)=(\mu(i)-r_{i}\mathbf{1})$,
$\rho(i)=a(i)^{-1}m(i)$, and set
\begin{equation}\label{eq:ell}
\ell_{i}=\rho(i)^{\top}m(i)=m(i)^{\top}a(i)^{-1}m(i) = \rho(i)^{\top}a(i)\rho(i) \ge 0.
\end{equation}
As a result, one obtains the following representation for the discounted
value $X$ of $S$:
\begin{equation}\label{eq:Xmart}
X_{t}=B_{t}S_{t}=S_{0}+\int_{0}^{t}D(X_{u})m(\tau_{u})du+\int_{0}^{t}D(X_{u})\sigma(\tau_{u-})dW_{u}, \quad t\ge 0.
\end{equation}



We now define the deterministic functions that plays a central role in the change of maesures.
\subsection{Auxiliary functions}\label{ssec:offline}

Let $\gamma(t)=e^{t\{\Lambda-D(\ell)\}}\mathbf{1}$ and $\delta(t)=e^{t\{\Lambda-D(\ell+r)\}}\mathbf{1}$. Then, for
all $i\in\{1,\ldots,l\}$, $\gamma_{i}(0)=1$, $\delta_i(0)=1$, and
\begin{eqnarray}
\dot{\gamma}_{i}(t)&=&\frac{d}{dt}\gamma_{i}(t)=-\ell_{i}\gamma_{i}(t)+\sum_{j=1}^{l}\Lambda_{ij}\gamma_{j}(t),\label{eq:odegamma}\\
\dot{\delta}_{i}(t)&=&\frac{d}{dt}\delta_{i}(t)=-(\ell_{i}+r_i)\delta_{i}(t)+\sum_{j=1}^{l}\Lambda_{ij}\delta_{j}(t)\label{eq:odedelta}.
\end{eqnarray}
The following lemma, proven in  Appendix \ref{app:lemgamma}, gives other representations for $\gamma$ and $\delta$.

\begin{lem}\label{lem:gamma} Let $\gamma$  and $\delta$ be the solutions of \eqref{eq:odegamma} and \eqref{eq:odedelta}.
Then for all
$t\ge0$ and for all $i\in\{1,\ldots,l\}$,  $1\ge\gamma_{i}(t)\ge e^{(\Lambda_{ii}-\ell_{i})t}>0$, $1\ge\delta_{i}(t)\ge e^{(\Lambda_{ii}-\ell_{i}-r_i)t}>0$, and
\begin{eqnarray}
\gamma_{i}(t) &=&E_{i}\left[
e^{-\int_{0}^{t}\ell_{\tau_{u}}du}\right], \label{eq:odegammarep}\\
\delta_{i}(t) &=&E_{i}\left[
e^{-\int_{0}^{t}\left(r_{\tau_u}+\ell_{\tau_{u}}\right)du}\right]. \label{eq:odedeltarep}
\end{eqnarray}

\end{lem}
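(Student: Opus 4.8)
The plan is to establish the probabilistic (Feynman--Kac) representations \eqref{eq:odegammarep} and \eqref{eq:odedeltarep} first, and then read off the two-sided bounds from them, since the bounds become almost immediate once the expectations are in hand. The key observation is that \eqref{eq:odegamma} is exactly the backward Kolmogorov equation for the chain $\tau$ with generator $\Lambda$ killed at rate $\ell$ (and \eqref{eq:odedelta} the same with killing rate $\ell+r$), so $\gamma_i(t)$ and $\delta_i(t)$ should coincide with the expectations of the corresponding multiplicative functionals.

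For the representation I would use a martingale argument based on the generator. Fix $T>0$ and set $h(t,j)=\gamma_j(T-t)$ for $t\in[0,T]$; then \eqref{eq:odegamma} reads $\partial_t h(t,j) = \ell_j h(t,j) - \sum_k \Lambda_{jk} h(t,k)$. Consider the process
\[
M_t = e^{-\int_0^t \ell_{\tau_u}\,du}\, h(t,\tau_t), \qquad t\in[0,T].
\]
Applying Dynkin's formula to the (time-homogeneous) space--time process $(t,\tau_t)$, whose generator is $\partial_t + \Lambda$, together with the product rule for the absolutely continuous multiplicative factor $e^{-\int_0^t \ell_{\tau_u}du}$, the finite-variation (drift) part of $M_t$ equals
\[
e^{-\int_0^t \ell_{\tau_u}du}\Big[-\ell_{\tau_t} h(t,\tau_t) + \partial_t h(t,\tau_t) + \sum_j \Lambda_{\tau_t j} h(t,j)\Big],
\]
which vanishes identically once $\partial_t h$ is replaced using the ODE above. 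Hence $M$ is a (local, then true) martingale, and equating $M_0=\gamma_i(T)$ with $E_i[M_T]=E_i\big[e^{-\int_0^T \ell_{\tau_u}du}\gamma_{\tau_T}(0)\big]=E_i\big[e^{-\int_0^T \ell_{\tau_u}du}\big]$ (using $\gamma_j(0)=1$) gives \eqref{eq:odegammarep}; the argument for $\delta$ is identical with $\ell$ replaced by $\ell+r$. An equivalent route is to set $g_i(t)=E_i[e^{-\int_0^t \ell_{\tau_u}du}]$, decompose on the first jump of $\tau$ to show $g$ satisfies \eqref{eq:odegamma} with $g(0)=\mathbf{1}$, and then invoke uniqueness for the linear system $\dot g=(\Lambda-D(\ell))g$.

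Given the representations, the bounds are straightforward. Since $\ell_i\ge0$ by \eqref{eq:ell} (and $r_i\ge0$), the integrands satisfy $e^{-\int_0^t \ell_{\tau_u}du}\le1$ and $e^{-\int_0^t (r_{\tau_u}+\ell_{\tau_u})du}\le1$, giving $\gamma_i(t)\le1$ and $\delta_i(t)\le1$. For the lower bounds I would restrict the (nonnegative) expectation to the event that $\tau$ makes no jump on $[0,t]$; conditionally on $\tau_0=i$ this event has probability $e^{\Lambda_{ii}t}$ (the holding time at $i$ is exponential with rate $-\Lambda_{ii}$), and on it $\int_0^t \ell_{\tau_u}du=\ell_i t$, so $\gamma_i(t)\ge e^{-\ell_i t}e^{\Lambda_{ii}t}=e^{(\Lambda_{ii}-\ell_i)t}>0$, and likewise $\delta_i(t)\ge e^{(\Lambda_{ii}-\ell_i-r_i)t}>0$. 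The main obstacle is the middle step: carefully justifying the Dynkin/product-rule computation for the purely discontinuous process $\tau$ together with the multiplicative functional, and upgrading the resulting local martingale to a genuine martingale. The latter is handled by noting that $\gamma=e^{t(\Lambda-D(\ell))}\mathbf{1}$ is continuous, hence bounded on $[0,T]$, while the number of jumps of $\tau$ on $[0,T]$ is integrable (dominated by a Poisson variable of rate $\max_i(-\Lambda_{ii})$), so $M$ is bounded and the stopping/limiting argument is routine.
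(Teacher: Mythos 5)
Your proof is correct, and it deviates from the paper's argument in both of its steps, in ways worth comparing. For the representation \eqref{eq:odegammarep}, the paper works in the opposite direction: it first proves a finite-state Feynman--Kac theorem (Theorem \ref{thm:FK}) by showing that $u_t(i)=E_i\bigl[f(\tau_t)e^{\int_0^t V(\tau_u)du}\bigr]$ satisfies the linear system via a first-jump, small-time decomposition, and then identifies $\gamma$ with the expectation by uniqueness of solutions of \eqref{eq:odegamma} --- this is exactly the ``equivalent route'' you mention only in passing. Your primary route instead starts from the ODE solution and shows that $M_t=e^{-\int_0^t \ell_{\tau_u}du}\gamma_{\tau_t}(T-t)$ is a bounded martingale; this is sound (boundedness follows as you say from $\ell\ge 0$ and continuity of $\gamma$ on $[0,T]$, so the local-to-true upgrade is immediate), and it has the incidental merit that this process is precisely the martingale $Z^{(2)}$ of Lemma \ref{lem:changeofmeasure0}, so your computation re-derives a fact the paper needs later anyway. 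For the lower bound, the paper argues analytically: it writes $\frac{d}{dt}\bigl\{\gamma_i(t)e^{(\ell_i-\Lambda_{ii})t}\bigr\}=e^{(\ell_i-\Lambda_{ii})t}\sum_{j\neq i}\Lambda_{ij}\gamma_j(t)\ge 0$, using $\Lambda_{ij}\ge 0$ for $j\neq i$ and the positivity of $\gamma_j$ already obtained from the representation, whence $\gamma_i(t)e^{(\ell_i-\Lambda_{ii})t}\ge \gamma_i(0)=1$. Your restriction of the expectation to the no-jump event, of probability $e^{\Lambda_{ii}t}$, yields the same bound probabilistically and makes transparent where the constant comes from: $e^{(\Lambda_{ii}-\ell_i)t}$ is exactly the contribution of the paths that never leave state $i$. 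Note that both routes need the representation before the lower bound (the paper to guarantee $\gamma_j>0$ in the derivative inequality, you to have an expectation to restrict), so the logical structure is the same; the paper's choice pays off globally because Theorem \ref{thm:FK} is reused elsewhere (in the proof of Lemma \ref{lem:changeofmeasure0} and in Appendix \ref{app:Laplace}), while your argument is more self-contained given standard Dynkin-formula facts.
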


Next, by Lemma \ref{lem:gamma}, for any $t\ge 0$ and any $i\in \{1,\ldots,l\}$, $\gamma_i(t) >0$ and $\delta_i(t) >0$, so one may define $\beta_{i}(t) = \delta_{i}(t)/\gamma_{i}(t)$, and
\begin{eqnarray}
(\tilde{\Lambda}_{t})_{ij} &=&\Lambda_{ij}\gamma_{j}(t)/\gamma_{i}(t),\quad i\neq j,\qquad(\tilde{\Lambda}_{t})_{ii}=-\sum_{j\neq i}(\tilde{\Lambda}_{t})_{ij},\label{eq:lambdat}\\
\left(\inw\Lambda_{t}\right)_{ij}&=&\frac{\Lambda_{ij}\delta_{j}(t)}{\delta_{i}(t)}=\frac{(\tilde \Lambda_t)_{ij}\beta_{j}(t)}{\beta_{i}(t)},\quad i\neq j,\qquad \left(\inw\Lambda_{t}\right)_{ii}=-\sum_{j\neq
i}\left(\inw\Lambda_{t}\right)_{ij}. \label{eq:lambdat0}
\end{eqnarray}
Then $\tilde{\Lambda}_{t}$, $t\in[0,T]$, is the infinitesimal generator
of a time inhomogeneous Markov chain. The same is true for $\inw{\Lambda}_{t}$, $t\in[0,T]$.

\begin{RM}\label{rem:beta} It follows from Lemma \ref{lem:gamma} that $\beta_{i}(0)=1$, and that for any $t>0$,
\begin{equation}
\beta_{i}(t)=\frac{E_{i}\left[ e^{-\int_{0}^{t}(r_{\tau_{u}}+\ell_{\tau_{u}})du}\right] }{\gamma_{i}(t)},\label{eq:beta1}
\end{equation}
and
\begin{equation}
\dot \beta_i(t) = \frac{d}{dt}\beta_{i}(t)=-r_{i}\beta_{i}(t)+(\tilde{\Lambda}_{t})_{ii}\beta_{i}(t)-\left(\inw\Lambda_{t}\right)_{ii}\beta_{i}(t), \quad t>0.\label{eq:betaedp}
\end{equation}
Note that if $r$ is constant,
then $\beta_{i}(t)=B_{t}$ and $\inw\Lambda_{t}=\tilde{\Lambda}_{t}$ for any $t\ge 0$.
\end{RM}


\subsection{Change of measure}

\label{ssec:change} 

Set $M_{t}=\int_{0}^{t}\rho(\tau_{u-})^{\top}D^{-1}(X_{u})dX_{u}$, $t\ge 0$. Then,
\begin{equation}\label{eq:M}
M_{t}=\int_{0}^{t}\ell_{\tau_{u}}du+\int_{0}^{t}\rho(\tau_{u-})^{\top}\sigma(\tau_{u-})dW_{u}=\int_{0}^{t}\ell_{\tau_{u}}du+\mathcal{M}_{t},\quad t\ge 0,
\end{equation}
where $\mathcal{M}$ is a martingale with quadratic variation
$[\mathcal{M},\mathcal{M}]_t = [M,M]_t= \int_0^t \ell_{\tau_u}du$, using \eqref{eq:ell} and \eqref{eq:Xmart}.  For $t\ge 0$, further set
$$
Z_t=\mathcal{E}_t\left\{ -M\right\}  = e^{-M_t -
\frac{1}{2}[M,M]_t} = \mathcal{E}_t\left\{ -\mathcal{M}\right\} e^{-\int_0^t \ell_{\tau_u}du} = Z_t^{(1)} e^{-\int_0^t \ell_{\tau_u}du},
$$
where $\mathcal{E}$ is the stochastic
exponential; see, e.g., \citet[page 85]{Protter:2004}.
We are now in a position to define the change of measure. The proof is given in Appendix \ref{app:changeofmeasure0}.
Note that this change of measure can be obtained as a limit of the discrete case \citep{Schweizer:1995a,Remillard/Rubenthaler:2013} when the number of hedging periods tends to infinity.

\begin{lem}\label{lem:changeofmeasure0}
 $Z$ is a multiplicative functional, and the
processes $Z_t^{(1)} = e^{-\mathcal{M}_t -
\frac{1}{2}[\mathcal{M},\mathcal{M}]_t}$ and
$Z_{t}^{(2)}=\gamma_{\tau_{t}}(T-t)e^{-\int_{0}^{t}\ell_{\tau_{u}}du}$
are positive orthogonal martingales. Moreover,
$E(Z_{T}|\calf_{t})=Z_{t}^{(1)}Z_{t}^{(2)}=Z_{t}\gamma_{\tau_{t}}(T-t)$ and $\gamma_{i}(t)=E_{i}(Z_{t})$, for all $t\ge0$.  Also,
$\left.\frac{d\tilde{P}}{dP}\right|_{\calf_{T}}=Z_{T}/\gamma_{\tau_{0}}(T)=Z_{T}^{(1)}Z_{T}^{(2)}/\gamma_{\tau_{0}}(T)$ defines an equivalent change of measure, and
for any $t\in [0,T]$,
\begin{equation}\label{eq:Ptilde}
\left.\frac{d\tilde{P}}{dP}\right|_{\calf_{t}}=Z_{t}\gamma_{\tau_{t}}(T-t)/\gamma_{\tau_{0}}(T)=Z_{t}^{(1)}Z_{t}^{(2)}/\gamma_{\tau_{0}}(T).
\end{equation}
Under $\tilde P$,
$\tau$ is an inhomogeneous Markov chain with generator $\tilde \Lambda_{T-t}$, $\tilde W_t = W_t +\int_0^t \sigma_{\tau_u}^\top \rho_{\tau_u} du$ is a Brownian motion independent of $\tau$, and $X$ is a $\tilde P$-martingale. As a result,   under $\tilde P$,
$(S,\tau)$ has infinitesimal generator $\tilde{\cH}_{t}$  given by
\begin{equation}\label{eq:genHHMMnew}
{\tilde{\mathcal{H}}}_{t}f(s,i)=\tilde{\L}_{i}f(s,i)+\sum_{j=1}^{l}(\tilde{\Lambda}_{T-t})_{ij}f(s,j),\quad t\in [0,T],
\end{equation}
with
\begin{equation}\label{eq:riskneutral-BMnew}
\tilde{\L}_{i}f(s,i)=r_{i}\sum_{k=1}^{d}s_{k}\partial_{s_{k}}f(s,i)+\frac{1}{2}\sum_{j=1}^{d}\sum_{k=1}^{d}a_{jk}(i)s_{j}s_{k}\partial_{s_{j}}\partial_{s_{k}}f(s,i).
\end{equation}
\end{lem}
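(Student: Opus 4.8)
The plan is to exploit the multiplicative splitting $Z = Z^{(1)}Z^{(2)}$ built into the statement, treating the $W$-driven factor $Z^{(1)}$ and the $\tau$-driven factor $Z^{(2)}$ separately and then recombining them through their orthogonality. First I would record from \eqref{eq:M} the decomposition $M_t = \int_0^t \ell_{\tau_u}du + \mathcal{M}_t$ together with $[\mathcal{M},\mathcal{M}]_t = \int_0^t \ell_{\tau_u}du$, so that $Z_t = Z_t^{(1)}e^{-\int_0^t \ell_{\tau_u}du}$ as asserted. Since $\ell$ is bounded on the finite state space, $[\mathcal{M},\mathcal{M}]_T$ is bounded a.s., so Novikov's condition holds trivially and $Z^{(1)}=\mathcal{E}(-\mathcal{M})$ is a strictly positive true martingale. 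For $Z^{(2)}$ I would apply the martingale property of the generator to $(t,i)\mapsto \gamma_i(T-t)$ against $\Lambda$, combined with the finite-variation factor $e^{-\int_0^t\ell}$; the ODE \eqref{eq:odegamma} is precisely what makes the resulting drift cancel, so $Z^{(2)}$ is a local martingale, and since $0<\gamma_i\le 1$ it is bounded, hence a genuine positive martingale.

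Orthogonality is then immediate: $Z^{(1)}$ is continuous and adapted to $W$, while $Z^{(2)}$ is pure-jump and adapted to $\tau$, and the independence of $W$ and $\tau$ forces $[Z^{(1)},Z^{(2)}]=0$. Consequently $Z^{(1)}Z^{(2)}$ is a martingale, and its terminal value is $Z_T^{(1)}\gamma_{\tau_T}(0)e^{-\int_0^T\ell}=Z_T$ because $\gamma_i(0)=1$; this yields $E(Z_T\mid\calf_t)=Z_t^{(1)}Z_t^{(2)}=Z_t\gamma_{\tau_t}(T-t)$. Conditioning on the whole path of $\tau$ and using $E(Z_t^{(1)}\mid\calf^\tau)=1$ gives $\gamma_i(t)=E_i(Z_t)$, whence $E[Z_T/\gamma_{\tau_0}(T)\mid\tau_0]=1$; positivity of $Z_T$ and $\gamma_{\tau_0}(T)$ then shows $Z_T/\gamma_{\tau_0}(T)$ is an equivalent probability density, and the $\calf_t$-density \eqref{eq:Ptilde} follows by taking conditional expectations.

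The substantive part, and the step I expect to be the main obstacle, is identifying the $\tilde P$-dynamics of $\tau$ and $\tilde W$ and establishing their independence. For $\tau$ I would first project the density onto $\calf^\tau$: since $E(Z_t^{(1)}\mid\calf^\tau)=1$ and $Z_t^{(2)}$ is $\calf_t^\tau$-measurable, the restriction of $\frac{d\tilde P}{dP}$ to $\calf^\tau$ has density $Z_t^{(2)}/\gamma_{\tau_0}(T)$. This is a space-time $h$-transform with $h(t,i)=\gamma_i(T-t)$, and I would verify that the compensator built from $\tilde\Lambda_{T-t}$ is correct by checking that $h^{-1}[\partial_t(hf)+\sum_j\Lambda_{ij}(hf)(t,j)-\ell_i(hf)(t,i)]$ equals $\partial_t f+\sum_j(\tilde\Lambda_{T-t})_{ij}f(t,j)$: the off-diagonal terms match by the definition \eqref{eq:lambdat} of $\tilde\Lambda$, and the diagonal terms match precisely because $\partial_t h+\Lambda h-D(\ell)h=0$, which is again \eqref{eq:odegamma}. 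For $\tilde W$ I would invoke Girsanov for $\mathcal{E}(-\mathcal{M})$: since $d[W,\mathcal{M}]_t=\sigma(\tau_{t-})^\top\rho(\tau_{t-})\,dt$, the shifted process $\tilde W_t=W_t+\int_0^t\sigma(\tau_{u-})^\top\rho(\tau_{u-})\,du$ is a $\tilde P$-Brownian motion. Independence of $\tilde W$ and $\tau$ is the delicate point: conditionally on the entire path of $\tau$, $\mathcal{M}$ is a continuous Gaussian martingale with deterministic quadratic variation, so $Z^{(1)}$ acts as a conditional Girsanov density under which $\tilde W$ is a standard Brownian motion whose law does not depend on the $\tau$-path; factoring $E^{\tilde P}[F(\tilde W)H(\tau)]$ through conditioning on $\calf^\tau$ then separates into $E^{\tilde P}[F(\tilde W)]\,E^{\tilde P}[H(\tau)]$.

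Finally, to obtain \eqref{eq:genHHMMnew}--\eqref{eq:riskneutral-BMnew} I would substitute $dW_t=d\tilde W_t-\sigma(\tau_{t-})^\top\rho(\tau_{t-})\,dt$ into \eqref{eq:Xmart}; the drift becomes $D(X_t)[m(\tau_t)-a(\tau_{t-})\rho(\tau_{t-})]\,dt$, which vanishes for a.e.\ $t$ since $a(i)\rho(i)=m(i)$ and $\tau$ has finitely many jumps, so $dX_t=D(X_t)\sigma(\tau_{t-})\,d\tilde W_t$ is a $\tilde P$-(local) martingale, the true martingale property following from the standard moment bounds for regime-switching GBM. Undoing the discounting gives $dS_t=r_{\tau_t}D(S_t)\mathbf 1\,dt+D(S_t)\sigma(\tau_{t-})\,d\tilde W_t$, i.e.\ under $\tilde P$ the process $S$ is a regime-switching GBM with drift $r_i$, which produces $\tilde\L_i$; combining this with the $\tau$-generator $\tilde\Lambda_{T-t}$ and the established independence yields $\tilde\cH_t$.
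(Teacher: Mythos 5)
Your proof is correct, and while it establishes the same chain of intermediate facts as the paper, several steps take a genuinely different route. For the martingale property of $Z^{(2)}$ you argue via the generator and the ODE \eqref{eq:odegamma} plus boundedness, where the paper simply reads off from Lemma \ref{lem:gamma} that $Z_t^{(2)}=E\left[e^{-\int_0^T \ell_{\tau_u}du}\,\middle|\,\calf_t^\tau\right]$ is a closed martingale; for the key identity $E(Z_T|\calf_t)=Z_t^{(1)}Z_t^{(2)}$ you use orthogonality of the two factors (product of orthogonal local martingales, with uniform integrability coming from $0<Z^{(2)}\le 1$ and $Z^{(1)}$ being closed), where the paper iterates conditioning on $\calf_t^W\vee\calf_T^\tau$. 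The most substantial divergence is the identification of the regime generator: you project the density onto $\calf^\tau$ (using $E[Z_t^{(1)}|\calf_t^\tau]=1$) and recognize a space-time Doob $h$-transform with $h(t,i)=\gamma_i(T-t)$, verifying the generator identity from \eqref{eq:lambdat} and \eqref{eq:odegamma}, whereas the paper performs a direct $o(h)$ expansion of $E^{\tilde P}[f(\tau_{t+h})|\calf_t]$ via the Feynman--Kac formula (Theorem \ref{thm:FK}). Your route is more conceptual---it explains where $\tilde\Lambda_{T-t}$ comes from---while the paper's is more elementary and self-contained. Two small imprecisions, neither fatal: (i) Girsanov applied to $\mathcal{E}(-\mathcal{M})$ alone yields a Brownian motion under the minimal martingale measure $Z_T^{(1)}\,dP$, not under $\tilde P$, whose density carries the extra factor $Z_T^{(2)}$; the conclusion survives because $[W,Z^{(2)}]=0$, so the Girsanov--Meyer correction under $\tilde P$ is unchanged, or---as you in fact then do---via the conditional Girsanov argument given the whole $\tau$-path, which is essentially the paper's own argument with general functionals in place of its exponential test functions; (ii) $Z^{(2)}$ is not pure-jump but of finite variation (only its martingale part is purely discontinuous), and it is finite variation against the continuity of $Z^{(1)}$ that actually forces $[Z^{(1)},Z^{(2)}]=0$.
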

\begin{RM}\label{rem:schweizer-follmer} Note that $Z^{(1)}$ corresponds
to the density of the so-called ``minimal martingale measure'' \citet{Follmer/Schweizer:1991},
which is different from the (unnormalized) density $Z^{(1)}Z^{(2)}$.
This is a non-trivial example of the difference between local risk-minimizing
strategies, as considered in \citet{Follmer/Schweizer:1991}, and
global risk-minimizing strategies, as considered here.
\end{RM}
The main result of this section is stated next, and it is proven in Appendix
\ref{app:changeofmeasure}.
\begin{lem}\label{lem:changeofmeasure}
$\tilde P$ is the  variance-optimal equivalent martingale measure, as defined in Section \ref{ssec:opt-var}.
\end{lem}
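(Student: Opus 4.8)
The plan is to use the $L^2$-projection (first-order) characterization of the variance-optimal measure from \citet{Schweizer:1996}, and then to reduce the whole statement to exhibiting a single stochastic-integral representation of the density $\tilde Z_T := \left.\frac{d\tilde P}{dP}\right|_{\calf_T}=Z_T/\gamma_{\tau_0}(T)$. Writing $G_T(\A)=\{\int_0^T\psi_u^\top dX_u:\psi\in\A\}$, the set of densities of measures in $\mathcal Q$ is the affine set $\mathcal Z=\{Z\in L^2(P):E(Z)=1\}\cap G_T(\A)^{\perp}$, since a square-integrable $Z$ with $E(Z)=1$ is the terminal density of a signed martingale measure exactly when $E(Z\int_0^T\psi_u^\top dX_u)=0$ for every $\psi\in\A$ (check this on the simple generators $\psi=Y\I_{(T_1,T_2]}$). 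Minimizing $\|Z\|_{L^2}$ over the affine set $\mathcal Z$, the minimizer $\tilde Z$ is characterized by $\tilde Z\perp(\mathcal Z-\tilde Z)$, and $\mathcal Z-\tilde Z=\{Z\in L^2:E(Z)=0\}\cap G_T(\A)^{\perp}=(\R+G_T(\A))^{\perp}$. Hence $\tilde P$ is variance-optimal if and only if (i) $\tilde P\in\mathcal Q$ and (ii) $\tilde Z_T\in\overline{\R+G_T(\A)}$ (closure in $L^2(P)$). Part (i) is already supplied by Lemma \ref{lem:changeofmeasure0}: $\tilde Z_T>0$, $E(\tilde Z_T)=1$, and $X$ is a $\tilde P$-martingale, so only (ii) remains.

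For (ii) I would work with the $P$-martingale $N_t=E(Z_T\mid\calf_t)=Z_t^{(1)}Z_t^{(2)}$, so that $\tilde Z_T=N_T/\gamma_{\tau_0}(T)$ with target constant $c=1$. Using the orthogonality of $Z^{(1)}$ and $Z^{(2)}$ from Lemma \ref{lem:changeofmeasure0}, $dN_t=Z^{(2)}_{t-}dZ^{(1)}_t+Z^{(1)}_{t-}dZ^{(2)}_t$; here $Z^{(1)}$ is continuous and $W$-driven (through the martingale $\mathcal M$ of \eqref{eq:M}), while $Z^{(2)}_t=\gamma_{\tau_t}(T-t)e^{-\int_0^t\ell_{\tau_u}du}$ is a pure-jump martingale carried by the jumps of $\tau$. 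The $W$-part of $dN$ is at once a stochastic integral against the martingale part $\int_0^\cdot D(X_u)\sigma(\tau_{u-})dW_u$ of $X$ in \eqref{eq:Xmart}, with integrand proportional to $\rho(\tau_{t-})^\top D(X_t)^{-1}$. The real obstacle is the pure-jump part $\int Z^{(1)}_{t-}dZ^{(2)}_t$: because $X$ is continuous, it cannot be reproduced by the $dW$-integral of any $\int\psi_u^\top dX_u$, and must instead be recovered from the finite-variation term $\int_0^T\psi_u^\top D(X_u)m(\tau_u)du$, whose own $\calf_t$-martingale also jumps with $\tau$.

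This is where the ODE \eqref{eq:odegamma} for $\gamma$ does the work: the relation $\dot\gamma_i(t)=-\ell_i\gamma_i(t)+\sum_j\Lambda_{ij}\gamma_j(t)$ is exactly the identity that makes the regime-dependent drift of a suitably chosen $\psi$ cancel the compensator of the jump martingale generated by $\gamma_{\tau_t}(T-t)$, so that, after an $L^2$-approximation of the resulting integrand by simple strategies (the construction underlying the definition of $\A$), one obtains $\tilde Z_T=1+\int_0^T\xi_u^\top dX_u$ with $\xi\in\A$, proving (ii). Equivalently, one may reach the same representation by taking the continuous-time limit of the explicit discrete-time variance-optimal density of \citet{Schweizer:1995a,Remillard/Rubenthaler:2013}, which is literally of the form $1+\sum_k\xi_k^\top(X_{t_{k+1}}-X_{t_k})$, as announced before Lemma \ref{lem:changeofmeasure0}.

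With (i) and (ii) in hand, optimality is immediate: for any $Q\in\mathcal Q$ with density $Z^Q_T$, the difference $Z^Q_T-\tilde Z_T$ lies in $(\R+G_T(\A))^{\perp}$, hence is orthogonal to $\tilde Z_T\in\overline{\R+G_T(\A)}$, and
\[
E\big[(Z^Q_T)^2\big]=E\big[\tilde Z_T^2\big]+E\big[(Z^Q_T-\tilde Z_T)^2\big]\ge E\big[\tilde Z_T^2\big],
\]
with equality only for $Q=\tilde P$. I expect the jump-to-drift representation of the previous paragraph to be the crux of the argument; everything else is the standard Hilbert-space projection computation.
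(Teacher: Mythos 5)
Your Hilbert-space setup (the identification of $\mathcal Q$-densities with $\{Z\in L^2: E(Z)=1\}\cap G_T(\A)^{\perp}$, the characterization that $\tilde P$ is variance-optimal iff $\tilde P\in\mathcal Q$ and $\tilde Z_T\in\overline{\R+G_T(\A)}$, and the closing Pythagoras step) is correct, and it is exactly the part of \citet{Schweizer:1996} that the paper invokes. The gap is in your part (ii), which is the entire content of the lemma and which you never actually establish. Your third paragraph asserts, with no computation, that the ODE \eqref{eq:odegamma} makes the drift of a suitably chosen strategy cancel the compensator of the jump martingale generated by $Z^{(2)}$, and concludes that $\tilde Z_T=1+\int_0^T\xi_u^{\top}dX_u$ with $\xi\in\A$. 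That conclusion is false as stated: for $\xi\in\A$ the integral $\int_0^T\xi_u^{\top}dX_u$ is orthogonal to $\tilde Z_T$ (the signed-martingale-measure property, extended from simple strategies to $\A$ by $L^2$-continuity), so pairing both sides with $\tilde Z_T$ gives $E\bigl[\tilde Z_T^2\bigr]=E[\tilde Z_T]=1$, forcing $\tilde Z_T\equiv1$, which is impossible unless $X$ is already a $P$-martingale; in this model $E\bigl[\tilde Z_T^2\bigr]=1/\gamma_{\tau_0}(T)>1$ unless $\ell\equiv0$. The constant in any such representation must be $E\bigl[\tilde Z_T^2\bigr]$, not $1$, and the same slip occurs in your appeal to the discrete-time formula.

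More importantly, the ``real obstacle'' you identify --- that the jumps of $Z^{(2)}$ cannot be produced by integrals against the continuous process $X$ --- is an artifact of working with the $P$-martingale $N_t=E(Z_T|\calf_t)=Z_t^{(1)}Z_t^{(2)}$. You only need to represent the terminal value $Z_T$, and the process that does this is $Z$ itself, not $N$: since $M_t=\int_0^t b_u^{\top}dX_u$ with $b_t=D^{-1}(X_t)\rho(\tau_{t-})$ by \eqref{eq:M}, the very definition of the stochastic exponential gives $Z_t=\mathcal{E}_t(-M)=1-\int_0^t Z_u\, b_u^{\top}dX_u$; the process $Z$ is continuous but not a $P$-martingale, and its drift plays exactly the compensating role you were trying to engineer by hand. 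Hence $\tilde Z_T=\gamma_{\tau_0}(T)^{-1}\left(1-\int_0^T(Z_ub_u)^{\top}dX_u\right)$ is immediately a constant plus a $dX$-integral, with no jump-compensation argument and no use of \eqref{eq:odegamma} needed at this point. What actually remains to be checked --- and what the paper's proof does --- is that $\bar b=Zb$ yields a square-integrable integral (so the representation lies in $\R+G_T(\A)$) and that the normalized $Z_T$ satisfies $E\left[Z_TH(X_V-X_U)\right]=0$, which follows from $X$ being a $\tilde P$-martingale (Lemma \ref{lem:changeofmeasure0}); these two facts are what \citet{Schweizer:1996} packages as ``$b$ is an adjustment process'', after which his Proposition 8 delivers optimality by precisely your projection argument. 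So your strategy is the right one, but the crux step is missing, and the sketch offered in its place does not work as written.
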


The following proposition defines the so-called forward  measure, which is necessary only when the interest rate is stochastic. The proof of the proposition is similar to the proof of Lemma \ref{lem:changeofmeasure0} so it is omitted.

\begin{prop}\label{prop:forward}
For any $t\in [0,T]$,
$ E^{\tilde{P}}\left[ \left.  B_T \right|\calf_{t}\right] = B_t \beta_{\tau_{t}}(T-t)$.
As a result,
\begin{equation}\label{eq:forward}
\left.\frac{d\inw
P}{d\tilde{P}}\right|_{\calf_{T}}= B_T/\beta_{\tau_{0}}(T)
\end{equation}
defines the forward measure, and for any $t\in[0,T]$,
$
\left.\frac{d\inw
P}{d\tilde{P}}\right|_{\calf_{t}}=\frac{\beta_{\tau_{t}}(T-t)}{\beta_{\tau_{0}}(T)}
B_t$.
Under $\inw P$,
$\tau$ is an inhomogeneous Markov chain with generator $\inw \Lambda_{T-t}$, $\inw W_t = W_t +\int_0^t \sigma_{\tau_u}^\top \rho_{\tau_u} du$ is a Brownian motion independent of $\tau$, and $\left(S_t/\beta_{\tau_t}(T-t)\right)_{t\in [0,T]}$ is a $\inw P$-martingale.
Moreover,  under $\inw P$, $(S,\tau)$ has infinitesimal generator
$\inw{\mathcal{H}}_{t}$ given by
\[
\inw{\mathcal{H}}_{t}f(s,i)=\tilde{\L}_{i}f(s,i)+\sum_{j=1}^{l}\left(\inw{\Lambda}_{T-t}\right)_{ij}f(s,j).
\]
\end{prop}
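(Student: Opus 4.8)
The plan is to mirror the structure of the proof of Lemma \ref{lem:changeofmeasure0}, which already establishes all the machinery needed here. The proposition has three distinct assertions: (i) the conditional expectation formula $E^{\tilde P}[B_T\,|\,\calf_t]=B_t\beta_{\tau_t}(T-t)$; (ii) that $B_T/\beta_{\tau_0}(T)$ is a genuine Radon--Nikodym density, giving a legitimate change of measure $\inw P$ with the stated restriction to $\calf_t$; and (iii) the dynamics of $(S,\tau)$ under $\inw P$, namely that $\tau$ becomes an inhomogeneous Markov chain with generator $\inw\Lambda_{T-t}$, that $\inw W$ is a $\inw P$-Brownian motion independent of $\tau$, and the resulting form of the infinitesimal generator $\inw{\mathcal H}_t$.

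\emph{Step (i).} I would first show that $U_t:=B_t\beta_{\tau_t}(T-t)$ is a $\tilde P$-martingale, since the conditional expectation formula is then immediate from $U_T=B_T\beta_{\tau_T}(0)=B_T$ (using $\beta_i(0)=1$ from Remark \ref{rem:beta}) and the martingale property $E^{\tilde P}[U_T\,|\,\calf_t]=U_t$. To verify the martingale property I would compute the drift of $U_t$ under $\tilde P$ using the product rule on $B_t=e^{-\int_0^t r_{\tau_u}du}$ and $\beta_{\tau_t}(T-t)$. The term $\dot B_t=-r_{\tau_t}B_t$ combines with the two contributions from differentiating $\beta_{\tau_t}(T-t)$: the time-derivative $-\dot\beta_{\tau_t}(T-t)$ (note the sign flip from the argument $T-t$) and the jump/generator term $\sum_j(\tilde\Lambda_{T-t})_{\tau_t j}\beta_j(T-t)$, the latter because under $\tilde P$ the chain $\tau$ has generator $\tilde\Lambda_{T-t}$ by Lemma \ref{lem:changeofmeasure0}. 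The key algebraic identity I expect to need is exactly equation \eqref{eq:betaedp} from Remark \ref{rem:beta}, which relates $\dot\beta_i$ to $-r_i\beta_i$, $(\tilde\Lambda_t)_{ii}\beta_i$, and $(\inw\Lambda_t)_{ii}\beta_i$; together with the off-diagonal relation $(\inw\Lambda_t)_{ij}\beta_i(t)=(\tilde\Lambda_t)_{ij}\beta_j(t)$ from \eqref{eq:lambdat0}, these should make the drift vanish identically.

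\emph{Steps (ii) and (iii).} Given (i), positivity of $\beta$ (from Lemma \ref{lem:gamma}) and $E^{\tilde P}[B_T/\beta_{\tau_0}(T)]=1$ show $\inw P$ is a well-defined equivalent probability measure, and the $\calf_t$-restriction follows by the Bayes/abstract conditional-expectation formula applied to the $\tilde P$-martingale $U_t/\beta_{\tau_0}(T)$. For the dynamics under $\inw P$, I would use the same martingale-characterization strategy as in Lemma \ref{lem:changeofmeasure0}: to identify the law of $(S,\tau)$ it suffices to show that $f(S_t,\tau_t)-\int_0^t\inw{\mathcal H}_u f(S_u,\tau_u)\,du$ is a $\inw P$-martingale for $f\in C_b^2$, which by the Bayes rule is equivalent to showing that its product with the density process $U_t/\beta_{\tau_0}(T)$ is a $\tilde P$-martingale. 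Since $\inw W=\tilde W$ as processes (both equal $W_t+\int_0^t\sigma_{\tau_u}^\top\rho_{\tau_u}du$), the Brownian part and the drift of $S$ transform exactly as in the risk-neutral generator $\tilde{\L}_i$ of \eqref{eq:riskneutral-BMnew}, so the diffusion generator is unchanged; the only genuinely new content is that the chain's generator is conjugated from $\tilde\Lambda_{T-t}$ to $\inw\Lambda_{T-t}$ by the factor $\beta$, which is precisely the content of \eqref{eq:lambdat0}.

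The main obstacle, and the step deserving the most care, is the drift cancellation in Step (i): one must handle the sign arising from the backward time argument $T-t$ correctly and assemble the three competing terms so that the identity \eqref{eq:betaedp} applies cleanly. Everything else is a faithful repetition of the change-of-measure computation already carried out for $\tilde P$, which is exactly why the authors remark that the proof is similar and omit it.
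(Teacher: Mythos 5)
Your proposal is correct, but it takes a genuinely different route from the one the paper intends (the omitted proof being ``similar to the proof of Lemma \ref{lem:changeofmeasure0}''). The paper's route works under the objective measure $P$: one writes $E^{\tilde P}\left[B_T|\calf_t\right]$ as a $P$-expectation against the density $Z^{(1)}Z^{(2)}$ of \eqref{eq:Ptilde}, conditions on $\calf_t^W\vee\calf_T^\tau$ to integrate out $Z^{(1)}$, and invokes the Feynman--Kac representation \eqref{eq:odedeltarep} of Lemma \ref{lem:gamma} to obtain $B_t\,\delta_{\tau_t}(T-t)/\gamma_{\tau_t}(T-t)=B_t\beta_{\tau_t}(T-t)$; the generator $\inw\Lambda_{T-t}$ is then identified by the same $h\downarrow 0$ expansion used for $\tilde\Lambda_{T-t}$, with $\delta$ playing the role of $\gamma$. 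You instead stay entirely under $\tilde P$: using the conclusion of Lemma \ref{lem:changeofmeasure0} that $\tau$ has generator $\tilde\Lambda_{T-t}$ there, you verify by a drift computation that $U_t=B_t\beta_{\tau_t}(T-t)$ is a $\tilde P$-martingale, and then pass to $\inw P$ by Bayes' rule. Your cancellation is right: with $f_t(i)=\beta_i(T-t)$, the drift of $U$ at state $i$ is $B_t\bigl\{-r_i\beta_i(T-t)-\dot\beta_i(T-t)+\sum_j(\tilde\Lambda_{T-t})_{ij}\beta_j(T-t)\bigr\}$, which vanishes by \eqref{eq:betaedp} together with $(\tilde\Lambda_t)_{ij}\beta_j(t)=\left(\inw\Lambda_t\right)_{ij}\beta_i(t)$ for $j\neq i$; since $0<\beta_i\le 1$ and $B\le 1$ (as $r\ge 0$), the local martingale is a true martingale. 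What each approach buys: the paper's computation is self-contained from $P$ and recycles the explicit representations of Lemma \ref{lem:gamma}, while yours exploits the fact that $d\inw P/d\tilde P$ is a functional of $\tau$ alone, so the Novikov/Girsanov step need not be repeated and it is transparent that only the law of the chain changes, $\beta$ conjugating $\tilde\Lambda_{T-t}$ into $\inw\Lambda_{T-t}$. Two small items you should still write out: the claim that $\left(S_t/\beta_{\tau_t}(T-t)\right)_{t\in[0,T]}$ is an $\inw P$-martingale follows in one line from Bayes' rule and the $\tilde P$-martingale property of $X$, namely $E^{\inw P}\left[S_T|\calf_t\right]=E^{\tilde P}\left[X_T|\calf_t\right]\big/\left\{B_t\beta_{\tau_t}(T-t)\right\}=S_t/\beta_{\tau_t}(T-t)$; and the independence of $\inw W$ from $\tau$ under $\inw P$ deserves the same moment-generating-function check as in Lemma \ref{lem:changeofmeasure0}, which here is immediate precisely because the density process depends only on $\tau$.
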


\subsection{Option price under $\tilde P$}
Suppose that the
payoff $\Phi$ is of
polynomial order\footnote{This means that $\Phi$ is bounded by a
constant times $\sum_{k=1}^n |p_k(s)|$, for some $n$, where each
$p_k$ is of the form $\prod_{j=1}^d
\left(s^{(j)}\right)^{\alpha_j}$, $\alpha_j \in \{0,1,2,\ldots\}$.}.
Under $\tilde P$ defines in Lemma \ref{lem:changeofmeasure0}, the value of a European option with payoff $\Phi$ at maturity $T$ is
\begin{equation}\label{eq:Cdefnew}
C_t(S_{t},\tau_{t}) =  E^{\tilde{P}}\left[ e^{-\int_{t}^{T}r_{\tau_{u}}du}\;\Phi(S_{T})|\calf_{t}\right] =  \frac{E\left[ B_T Z_{T} \;\Phi(S_{T})|\calf_{t}\right] }{B_t Z_{t}\gamma_{\tau_{t}}(T-t)}.
\end{equation}
Since $Z$ is a multiplicative functional by Lemma \ref{lem:changeofmeasure0}, $C$ can also be written as
\begin{equation}\label{eq:Crep1new}
C_t(s,i)=E_{s,i}\left[ B_{T-t}Z_{T-t} \Phi(S_{T-t})\right]
\Big{/}\gamma_{i}(T-t).
\end{equation}
The next result, proven in Appendix \ref{app:density},  shows that option prices are smooth.

\begin{prop}\label{prop:density}
The  density $f_{s,t}(y)$ of $S_t$ is infinitely differentiable with
respect to $s$ and $y$, and continuously differentiable with respect
to $t$ if ${\tilde \Lambda}_t$ is. Moreover all these derivatives are
integrable and the density possesses moments of all orders.
\end{prop}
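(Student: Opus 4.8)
The plan is to reduce everything to the Gaussian case by conditioning on the regime trajectory. Fix the initial data $(S_0,\tau_0)=(s,i)$. By the explicit solution \eqref{eq:sdesol}, conditionally on the whole path $(\tau_u)_{0\le u\le t}$ the vector $\log S_t$ is Gaussian: its mean is $\log s + m_t(\tau)$ with $m_t(\tau)=\int_0^t \upsilon(\tau_u)\,du$ (the drift being replaced by that of $\tilde\L$ if one works under $\tilde P$), and its covariance is $\Sigma_t(\tau)=\int_0^t a(\tau_u)\,du$, since $W$ is independent of $\tau$ and $a(i)=\sigma(i)\sigma(i)^\top$. Writing $g(\cdot\,;m,\Sigma)$ for the $\R^d$ Gaussian density and changing variables from $\log$-prices to prices, the conditional density of $S_t$ on $(0,\infty)^d$ is $\phi(y;s,m,\Sigma)=g(\log y-\log s-m;\Sigma)\prod_{j=1}^d (y^{(j)})^{-1}$, so that $f_{s,t}(y)=E_{i}\!\left[\phi\bigl(y;s,m_t(\tau),\Sigma_t(\tau)\bigr)\right]$, the expectation being over the law of $\tau$.

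The key point is a uniform compactness estimate for these conditional Gaussian parameters. Because there are finitely many regimes and each $a(i)$ is positive definite, with $\lambda_*=\min_i\lambda_{\min}(a(i))>0$ and $\lambda^*=\max_i\lambda_{\max}(a(i))$ one has $t\lambda_* I \preceq \Sigma_t(\tau)\preceq t\lambda^* I$ for \emph{every} trajectory $\tau$, since $\Sigma_t(\tau)=\sum_i a(i)\,\theta_i$ with occupation times $\theta_i\ge 0$, $\sum_i\theta_i=t$; likewise $|m_t(\tau)|\le t\max_i|\upsilon(i)|$. Thus, as $\tau$ ranges over all paths, $(m,\Sigma)$ stays in a fixed compact subset $K$ of $\R^d$ times the positive-definite cone. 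On $K$ the Gaussian density and each of its partial derivatives in $(s,y)$ are bounded by a single Schwartz-type envelope $h(y)$ that is integrable on $(0,\infty)^d$ and locally bounded in $s$. Differentiating $f_{s,t}$ arbitrarily often in $s$ and $y$ under the expectation sign is therefore legitimate by dominated convergence, every such derivative is continuous and integrable in $y$, and moments of all orders are finite because, the conditional law being log-normal with parameters in $K$, $E_i[\,|S_t|^p\mid\tau]$ is bounded uniformly in $\tau$ for every $p$.

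It remains to treat differentiability in $t$, which is the only delicate step, because $t$ enters $f_{s,t}$ both through the upper limit of the occupation integrals and, under $\tilde P$, through the time-inhomogeneous generator $\tilde\Lambda_t$ governing the law of $\tau$. I would condition on the number $N$ of regime jumps in $[0,t]$ and on the ordered jump times and marks; given these, $m_t(\tau)$ and $\Sigma_t(\tau)$ are explicit piecewise-affine, hence $C^1$, functions of $t$, while the joint density of the jump data inherits its $t$-regularity directly from $t\mapsto\tilde\Lambda_t$. Summing over $N$ produces a series that, together with its formal $t$-derivative, is dominated uniformly by the envelope $h$ times a Poisson tail in $N$; term-by-term differentiation is thus justified and gives continuity of $\partial_t f_{s,t}$ whenever $\tilde\Lambda_t$ is $C^1$. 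Equivalently, one may invoke the Kolmogorov forward equation $\partial_t f = \tilde{\cH}_t^{*} f$ and read off the $t$-regularity from the spatial smoothness already proved together with the regularity of the coefficients.

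The main obstacle is precisely this last step: since the driving randomness $\tau$ and its occupation times themselves depend on $t$, one cannot differentiate under the expectation naively, and the legitimacy of the term-by-term differentiation rests on combining the uniform Gaussian envelope with the rapid (Poisson) decay of the jump-count distribution.
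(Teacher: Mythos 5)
Your core strategy coincides with the paper's: condition on the regime path so that $\log S_t$ is conditionally Gaussian with occupation-time mean and covariance, then differentiate under the expectation. Your handling of the $(s,y)$-smoothness, integrability of the derivatives, and moments of all orders — finitely many regimes plus uniform ellipticity of the $a(i)$ force the conditional Gaussian parameters into a fixed compact set, giving a single integrable envelope that dominates every derivative — is correct, and is in fact a more explicit justification of the dominated-convergence step than the paper spells out.

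The gap is in the $t$-differentiability step, which you yourself identify as the delicate one. First, a local slip: ``piecewise-affine, hence $C^1$'' is false (piecewise-affine functions fail to be $C^1$ at kinks); this is repairable, but it signals the real difficulty. Second, and decisively: once you condition on the number of jumps $N$ in $[0,t]$, the conditional law of the jump times is supported on the simplex $\{0<t_1<\cdots<t_n<t\}$, whose very domain moves with $t$. Your proposed domination (Gaussian envelope times Poisson tail in $N$) controls the sum over $N$, but it does not legitimize differentiating, in $t$, integrals whose region of integration depends on $t$; that is where the argument as written stalls. The paper closes exactly this hole with the uniformization construction of Algorithm \ref{algo:MC}: conditional on $N_t=n$, the jump times are represented as $t\,U_{n:k}$ with the $U$'s order statistics of uniforms on $[0,1]$, i.e.\ one substitutes $t_k=t u_k$, so the integration runs over the \emph{fixed} simplex $\{0<u_1<\cdots<u_n<1\}$, the occupation fractions $u_k-u_{k-1}$ are $t$-free, and every occurrence of $t$ is explicit and smooth — through the transition weights $g_{\alpha_{k-1}\alpha_k}\!\left(\Lambda_{tu_k}/\lambda\right)$ (this is precisely where differentiability of $t\mapsto\tilde\Lambda_t$ enters) and through the scaling $tV$, $tA$ of the conditional Gaussian parameters. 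With that change of variables your term-by-term differentiation goes through verbatim; without it, the step does not. (Your fallback via the Kolmogorov forward equation does not help either: invoking $\partial_t f=\tilde{\cH}_t^{*}f$ classically presupposes the $t$-differentiability you are trying to prove.)
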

Note that by Proposition \ref{prop:density}, $C_t(s,i)$ is continuously differentiable with respect to $t$,
and twice continuously differentiable with respect to $s \in
(0,\infty)^d$, for any $t <T$.
The next result, which follows from the definition of $C$ together with Lemma \ref{lem:changeofmeasure0} and Proposition \ref{prop:forward}, gives  two more
 representations for $C$.

\begin{lem}\label{lem:Crepnew}
Let  $C$ be given by \eqref{eq:Cdefnew}. If $(\tilde{S},\tilde{\tau})$ has infinitesimal generator $\tilde{\cH}_{t}$,
then
\begin{equation}
C(t,s,i)=E\left[ e^{-\int_{t}^{T}r_{\tilde{\tau}_{u}}du}\;\Phi(\tilde{S}_{T})|\tilde{S}_{t}=s,\tilde{\tau}_{t}=i\right] .\label{eq:Crep3new}
\end{equation}

Also, if $\left(\inw S,\inw{\tau}\right)$ is a process with generator
$\inw{\mathcal{H}}_{t}$, then
\begin{equation}\label{eq:Crep4new}
C(t,s,i)=\beta_{i}(T-t)E\left[ \Phi\left(\inw S_{T}\right)|\inw S_{t}=s,\inw{\tau}_{t}=i\right] .
\end{equation}
\end{lem}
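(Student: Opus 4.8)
The plan is to establish the two stated representations directly from the definition of $C$ in \eqref{eq:Cdefnew}--\eqref{eq:Crep1new}, using the probabilistic descriptions of $\tilde P$ and $\inw P$ provided by Lemma \ref{lem:changeofmeasure0} and Proposition \ref{prop:forward}. The key observation is that \eqref{eq:Crep3new} is essentially a restatement of the defining conditional expectation \eqref{eq:Cdefnew} once one identifies the law of $(S,\tau)$ under $\tilde P$ with the law of the Markov process $(\tilde S,\tilde \tau)$ having generator $\tilde{\cH}_t$.

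First I would prove \eqref{eq:Crep3new}. By Lemma \ref{lem:changeofmeasure0}, under $\tilde P$ the pair $(S,\tau)$ is a (time-inhomogeneous) Markov process whose infinitesimal generator at time $t$ is exactly $\tilde{\cH}_t$, the generator assumed for $(\tilde S,\tilde\tau)$. Since both processes solve the same martingale problem with the same generator, they share the same finite-dimensional laws; in particular the conditional law of $(S_u)_{u\ge t}$ given $(S_t,\tau_t)=(s,i)$ under $\tilde P$ coincides with that of $(\tilde S_u)_{u\ge t}$ given $(\tilde S_t,\tilde\tau_t)=(s,i)$. The functional $e^{-\int_t^T r_{\tau_u}du}\Phi(S_T)$ is measurable with respect to this trajectory, so taking $E^{\tilde P}[\cdot\mid\calf_t]$ in \eqref{eq:Cdefnew} and invoking the Markov property to replace conditioning on $\calf_t$ by conditioning on $(S_t,\tau_t)$ yields precisely \eqref{eq:Crep3new}. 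I would note that the Markov property here is justified because $(S,\tau)$ is Markov (as stated in Section \ref{ssec:model}) and the change of measure in Lemma \ref{lem:changeofmeasure0} preserves the Markov structure.

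Next I would derive \eqref{eq:Crep4new} by changing from $\tilde P$ to the forward measure $\inw P$. Starting from \eqref{eq:Crep3new}, I would write $e^{-\int_t^T r_{\tilde\tau_u}du} = B_T/B_t$ and apply the forward-measure density from Proposition \ref{prop:forward}. Concretely, using $\left.\frac{d\inw P}{d\tilde P}\right|_{\calf_T} = B_T/\beta_{\tau_0}(T)$ together with its $\calf_t$-restriction, a Bayes-rule (abstract change-of-measure) computation gives, for any $\calf_T$-measurable $\Phi(S_T)$,
\begin{equation}
E^{\tilde P}\!\left[\tfrac{B_T}{B_t}\,\Phi(S_T)\,\Big|\,\calf_t\right] = \beta_{\tau_t}(T-t)\,E^{\inw P}\!\left[\Phi(S_T)\,\big|\,\calf_t\right],
\end{equation}
where the factor $\beta_{\tau_t}(T-t)$ arises exactly from the ratio of the $\calf_t$- and $\calf_T$-densities, since $E^{\tilde P}[B_T\mid\calf_t] = B_t\beta_{\tau_t}(T-t)$ by Proposition \ref{prop:forward}. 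Finally, by the same martingale-problem identification as above, under $\inw P$ the process $(S,\tau)$ has generator $\inw{\cH}_t$ and hence matches $(\inw S,\inw\tau)$ in law, so the conditioning on $\calf_t$ collapses to conditioning on $(\inw S_t,\inw\tau_t)=(s,i)$ by the Markov property, producing \eqref{eq:Crep4new}.

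The main obstacle, and the step deserving the most care, is the rigorous justification of the Bayes-rule change-of-measure identity and the bookkeeping of the density ratio that produces the factor $\beta_{\tau_t}(T-t)$; one must verify that the $\calf_t$-conditional density is $\beta_{\tau_t}(T-t)B_t/\beta_{\tau_0}(T)$ and that the abstract Bayes formula applies, which requires the integrability furnished by the polynomial growth of $\Phi$ together with Proposition \ref{prop:density}. The Markov-property and martingale-problem identifications are routine given the results already established, so I would state them briefly and concentrate the argument on the forward-measure computation; since the paper remarks that the proof parallels that of Lemma \ref{lem:changeofmeasure0}, I expect the bulk of the technical detail to be inherited from there.
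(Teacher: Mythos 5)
Your proposal is correct and follows exactly the route the paper intends: the paper gives no separate proof of Lemma \ref{lem:Crepnew}, stating only that it ``follows from the definition of $C$ together with Lemma \ref{lem:changeofmeasure0} and Proposition \ref{prop:forward}'', and your argument---identifying the law of $(S,\tau)$ under $\tilde P$ (resp.\ $\inw P$) with that of $(\tilde S,\tilde\tau)$ (resp.\ $(\inw S,\inw\tau)$) via the generators, then applying the abstract Bayes rule with the densities $B_T/\beta_{\tau_0}(T)$ and $\beta_{\tau_t}(T-t)B_t/\beta_{\tau_0}(T)$ to produce the factor $\beta_i(T-t)$---is precisely the fleshed-out version of that remark. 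The details you supply (Markov-problem identification, integrability from the polynomial growth of $\Phi$) are the right ones.
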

\begin{RM}
Under the assumptions on $\Phi$, it follows from Proposition \ref{prop:density} that $C$ is smooth, and  an application of Ito's formula yields that $C$ satisfies
\begin{equation}\label{eq:odegammanew}
\partial_{t}C_{t}(s,i)+{\tilde{\cH}}_{t}C_{t}(s,i)=r_{i}C_{t}(s,i),\quad C_{T}(s,i)=\Phi(s).
\end{equation}
\end{RM}
%
%

\begin{RM} Note that using the Algorithm \ref{algo:MC} described in Appendix \ref{app:sim}, together with \eqref{eq:Crep4new},
it is easy to use a Monte-Carlo method to estimate $C_{t}$.
\end{RM}

\subsection{Call option}\label{ssec:call1d}

For the remaining of the section, suppose that $d=1$ and consider
the European call option payoff $\Phi(s)=\max(0,s-K)$. We show below that as in the Black-Scholes case,  one can write the price $C$ of the option using only the distribution function of the log-return $\log{S_T}$ under the forward measure $\inw P$ and another measure obtained by a Esscher transform. These distribution functions could be possibly computed by inverting their Laplace transforms or their characteristic functions. See Appendix \ref{app:Laplace} for the corresponding formulas.
First, we define the Esscher transform using the fact that
\begin{eqnarray*}
E^{\inw P}\left[ S_{T}|\calf_{t}\right]  & = & \frac{E^{\tilde{P}}\left[ e^{-\int_{t}^{T}r_{\tau_{u}}du}\; S_{T}|\calf_{t}\right] }{E^{\tilde{P}}\left[ e^{-\int_{t}^{T}r_{\tau_{u}}du}|\calf_{t}\right] }=\frac{S_{t}}{\beta_{\tau_{t}}(T-t)}.
\end{eqnarray*}
We introduce the change of measure by setting $\left.\frac{d\check{P}}{d\inw P}\right|_{\calf_{T}}=\frac{S_{T}}{E^{\inw P}\left(S_{T}\right)}$.
Next, according to \eqref{eq:Crep4new},
\begin{eqnarray}
C_{t}(s,i) & = & s\frac{E^{\inw P}\left[ S_{T}\I\left(S_{T}>K\right)|S_{t}=s,\tau_{t}=i\right] }{E^{\inw P}\left[ S_{T}|S_{t}=s,\tau_{t}=i\right] }
 -\beta_{i}(T-t)K{\inw P}\left[ S_{T}>K|S_{t}=s,{\tau}_{t}=i\right] \nonumber \\
 & = & s\check{P}\left[ S_{T}>K|S_{t}=s,\tau_{t}=i\right]-\beta_{i}(T-t)K{\inw P}\left[ S_{T}>K|S_{t}=s,{\tau}_{t}=i\right] \label{eq:call1d}.
 \end{eqnarray}
Furthermore, if  for a given $t$, $\inw f$ is the conditional density
of $\log\left(S_{T}/s\right)$ given $S_{t}=s,\tau_{t}=i$ under $\inw P$,
and if $\check{f}$ is the conditional density of $\log\left(S_{T}/s\right)$
given $S_{t}=s,\tau_{t}=i$ under $\check{P}$, then $\check{f}(x)=\beta_{i}(T-t)e^{x}\inw f(x)$,
so using \eqref{eq:call1d}, one finds
\begin{eqnarray*}
\partial_{s}C_{t}(s,i) & = & \check{P}\left[ S_{T}>K|S_{t}=s,\check{\tau}_{t}=i\right] +\check{f}\{\log(K/s)\}-\beta_{i}(T-t)\frac{K}{s}\inw f\{\log(K/s)\}\\
 & = & \check{P}\left[ S_{T}>K|S_{t}=s,\tau_{t}=i\right] .
\end{eqnarray*}

It only remains to find the distribution of $(S,\tau)$ under $\check P$. This is done in the next Lemma, whose proof is given in Appendix \ref{app:checkP}.

\begin{lem}\label{lem:checkP}
Under the change of measure $\left.\frac{d\check{P}}{d\inw P}\right|_{\calf_{T}}=\frac{S_{T}}{E^{\inw P}\left(S_{T}\right)}$,
$\left(S,\tau\right)$ has infinitesimal generator $\tilde{\mathcal{H}}_{t}$
defined by $\check{\mathcal{H}}_{t}f(s,i)=\check{\L}_{i}f(s,i)+\tilde{\Lambda}_{T-t}f(s,i)$,
where for any smooth $f$,
\[
\check{\L}_{i}f(s,i)=(r_{i}+a_{i})s\partial_{s}f(s,i)+\frac{a_{i}}{2}s^{2}\partial_{s}^{2}f(s,i), \quad i\in\{1,\ldots,l\}.
\]
\end{lem}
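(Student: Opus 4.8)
The plan is to realize $\check P$ as a Doob $h$-transform of the forward measure $\inw P$ and to read off the generator from the classical change-of-measure formula for Markov processes. First I would identify the density process. By Proposition~\ref{prop:forward} the process $\left(S_t/\beta_{\tau_t}(T-t)\right)_{t\in[0,T]}$ is a $\inw P$-martingale, and since $\beta_i(0)=1$ its terminal value equals $S_T$; hence $E^{\inw P}[S_T\mid\calf_t]=S_t/\beta_{\tau_t}(T-t)$ and
\[
N_t := \left.\frac{d\check P}{d\inw P}\right|_{\calf_t}=\frac{E^{\inw P}[S_T\mid\calf_t]}{E^{\inw P}(S_T)}=\frac{S_t/\beta_{\tau_t}(T-t)}{S_0/\beta_{\tau_0}(T)}.
\]
I would then observe that $N_t=h(t,S_t,\tau_t)$ with $h(t,s,i)=s/\beta_i(T-t)$, up to the multiplicative constant $S_0/\beta_{\tau_0}(T)$, is a strictly positive $\inw P$-martingale; in particular $h$ is space-time harmonic for the $\inw P$-generator, i.e.\ $\partial_t h+\inw{\mathcal{H}}_t h=0$.

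Next I would invoke the Markov change-of-measure (Doob $h$-transform) identity: under $\check P$ the pair $(S,\tau)$ is again Markov, with time-dependent generator
\[
\check{\mathcal{H}}_t f=\frac{1}{h_t}\left[\inw{\mathcal{H}}_t(h_t f)-f\,\inw{\mathcal{H}}_t h_t\right],
\]
the harmonicity of $h$ being precisely what removes the $\partial_t h$ term from the right-hand side. Applying this with $h_t(s,i)=s/\beta_i(T-t)$ splits into two pieces. For the diffusion part $\tilde{\L}_i$, the factor $1/\beta_i(T-t)$ is constant in $s$ and cancels, and a direct computation of the carr\'{e}-du-champ, $\tilde{\L}_i(sf)-f\,\tilde{\L}_i(s)=(r_i+a_i)s^2\partial_s f+\tfrac12 a_i s^3\partial_s^2 f$, divided by $s$, yields exactly $\check{\L}_i f=(r_i+a_i)s\partial_s f+\tfrac{a_i}{2}s^2\partial_s^2 f$; the new drift $a_i s\partial_s f$ is the Girsanov shift produced by tilting with $S_T$. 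For the jump part the transform gives $\sum_j(\inw{\Lambda}_{T-t})_{ij}\frac{\beta_i(T-t)}{\beta_j(T-t)}\bigl[f(s,j)-f(s,i)\bigr]$, and the identity $(\inw{\Lambda}_{T-t})_{ij}=(\tilde{\Lambda}_{T-t})_{ij}\beta_j(T-t)/\beta_i(T-t)$ from \eqref{eq:lambdat0} collapses the tilted rates back to $\tilde{\Lambda}_{T-t}$. Summing the two contributions gives $\check{\mathcal{H}}_t f=\check{\L}_i f+\sum_j(\tilde{\Lambda}_{T-t})_{ij}f(s,j)$, which is the asserted form.

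The algebra above is immediate; the real work is to make the $h$-transform rigorous rather than formal. Concretely, I would prove the conclusion by showing that for every smooth $f$ of polynomial growth the process $N_t\bigl[f(S_t,\tau_t)-\int_0^t\check{\mathcal{H}}_u f(S_u,\tau_u)\,du\bigr]$ is a genuine $\inw P$-martingale, which by the Bayes rule is equivalent to $f(S_t,\tau_t)-\int_0^t\check{\mathcal{H}}_u f\,du$ being a $\check P$-martingale. The integrability furnished by Proposition~\ref{prop:density}—finite moments of all orders and integrable derivatives—is what lets me pass from the local-martingale identity produced by It\^o's formula to a true martingale, and this is the step I expect to be the main obstacle. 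I would keep the two sources of randomness separate, treating the continuous $\inw W$-driven part of $N$ through the quadratic covariation $d[N,f(S,\tau)]$ and the pure-jump part through the compensator of $\tau$, so that the cross terms reproduce exactly the drift shift $a_i s\partial_s f$ and the rate change $\inw{\Lambda}_{T-t}\to\tilde{\Lambda}_{T-t}$ found above.
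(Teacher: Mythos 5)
Your proposal is correct, and it reaches the stated generator by a route that is organized differently from the paper's. You realize $\check P$ as a Doob $h$-transform of the forward measure $\inw P$, with the explicit space--time harmonic function $h_t(s,i)=s/\beta_i(T-t)$ furnished by Proposition \ref{prop:forward}, and you read the generator off the transform identity $\check{\mathcal{H}}_tf=\frac{1}{h_t}\left[\inw{\mathcal{H}}_t(h_tf)-f\,\inw{\mathcal{H}}_th_t\right]$; the cancellation $(\inw{\Lambda}_{T-t})_{ij}\,\beta_i/\beta_j=(\tilde{\Lambda}_{T-t})_{ij}$ coming from \eqref{eq:lambdat0} is what collapses the tilted jump rates back to $\tilde{\Lambda}_{T-t}$, and your carr\'e-du-champ computation for $\tilde{\L}_i$ checks out. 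The paper instead pushes the conditional expectation all the way back to $\tilde P$: by Bayes and Proposition \ref{prop:forward}, $E^{\check P}\left[f(S_{t+h},\tau_{t+h})|\calf_t\right]=\frac{1}{sB_t}E^{\tilde P}\left[X_Tf(S_{t+h},\tau_{t+h})|\calf_t\right]$, then uses the tower property and the $\tilde P$-martingale property of $X$ to replace $X_T$ by $X_{t+h}$, and computes the difference quotient directly, arriving at the compact identity $\check{\mathcal{H}}_tf(s,i)=-r_if(s,i)+\tilde{\cH}_t(fg)(s,i)/s$ with $g(s)=s$. What each buys: the paper's route needs only $\tilde{\cH}_t$ (never the explicit form of $\inw{\mathcal{H}}_t$ or the $\inw{\Lambda}$--$\tilde{\Lambda}$ relation) and is shorter; yours explains structurally \emph{why} the regime generator under $\check P$ is $\tilde{\Lambda}_{T-t}$ rather than $\inw{\Lambda}_{T-t}$ (the two $\beta$-tilts cancel), and your plan for rigor --- showing $N_t\left[f(S_t,\tau_t)-\int_0^t\check{\mathcal{H}}_uf(S_u,\tau_u)\,du\right]$ is a true $\inw P$-martingale using the moment bounds of Proposition \ref{prop:density} --- is if anything more careful than the paper's own limit computation, which leaves the interchange of limit and expectation implicit.
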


Having defined the price of the option under the equivalent martingale measure $\tilde P$, we are now in a position to find the optimal strategy $(\pi,\psi)\in \mathbb{R}\times \mathcal{A}$ minimizing the quadratic hedging error  $\mathit{HE}(\pi,\psi)$. This is the content of the next section.

\section{Optimal hedging strategy}\label{sec:hedging}

Let $C$ be defined by \eqref{eq:Crep1new}, and set
\begin{equation}\label{eq:alphadef}
\alpha_t(s,i)=\nabla_{s}C_{t}(s,i)+C_{t}(s,i)D^{-1}(s)\rho(i).
\end{equation}
Further let $V$ be the solution of the stochastic differential equation
\begin{equation}\label{eq:Vdef}
V_{t}=C_0(s,i)+\int_{0}^{t}\alpha_u(S_{u},\tau_{u-})^{\top}dX_{u}-\int_{0}^{t}V_{u-}dM_{u},\quad (S_0,\tau_0)=(s,i),
\end{equation}
which exists and is unique, according to \citet[Theorem
V.7]{Protter:2004}.
The proof of the following lemma is given in Appendix \ref{app:alpha}.

\begin{lem}\label{lem:alpha} Define $G_{t}=B_{t}C_t(S_{t},\tau_{t})-V_{t}$, and set
\begin{eqnarray}
\phi_{t} & = & \alpha_t(S_{t},\tau_{t-})-V_{t-}D^{-1}(X_{t})\rho(\tau_{t-})\label{eq:phidef}\\
 & = & \nabla_{s}C_{t}(S_{t},\tau_{t-})+G_{t-}D^{-1}(X_{t})\rho(\tau_{t-}).\label{eq:phidef2}
\end{eqnarray} Then $\phi$ is predictable
and
\begin{equation}
V_{t}=C_{0}(S_{0},\tau_{0})+\int_{0}^{t}\phi_{u}^{\top}dX_{u},\label{eq:Vdef2}
\end{equation}
\end{lem}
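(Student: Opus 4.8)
The plan is to verify \eqref{eq:Vdef2} by a direct manipulation of the defining equation \eqref{eq:Vdef}, after first recording that every process involved is continuous in its ``diffusive'' arguments. Since $X=BS$ is continuous ($B$ is a continuous finite-variation process and $S$ is continuous) and since $M$ is continuous (by \eqref{eq:M}, $M_t = \int_0^t \ell_{\tau_u}du + \mathcal{M}_t$ with $\mathcal{M}$ a continuous martingale), the solution $V$ of \eqref{eq:Vdef} is itself continuous; in particular $X_{t-}=X_t$, $S_{t-}=S_t$ and $V_{t-}=V_t$. The only ingredient that genuinely jumps is $\tau$, which is precisely why $\phi$ must be evaluated at the left limit $\tau_{t-}$.

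First I would establish \eqref{eq:Vdef2}. From the definition $M_t = \int_0^t \rho(\tau_{u-})^\top D^{-1}(X_u)\,dX_u$ one has $dM_t = \rho(\tau_{t-})^\top D^{-1}(X_t)\,dX_t$, so by associativity of the stochastic integral \citep{Protter:2004}, $\int_0^t V_{u-}\,dM_u = \int_0^t V_{u-}\rho(\tau_{u-})^\top D^{-1}(X_u)\,dX_u$. Substituting this into \eqref{eq:Vdef} and using that $D^{-1}(X_u)$ is diagonal, hence symmetric, so that $\rho(\tau_{u-})^\top D^{-1}(X_u) = \left(D^{-1}(X_u)\rho(\tau_{u-})\right)^\top$, yields $V_t = C_0(S_0,\tau_0) + \int_0^t \left(\alpha_u(S_u,\tau_{u-}) - V_{u-}D^{-1}(X_u)\rho(\tau_{u-})\right)^\top dX_u$, which is exactly $\int_0^t \phi_u^\top dX_u$ for $\phi$ as in \eqref{eq:phidef}. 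This is the claimed representation.

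Next I would derive the equivalent expression \eqref{eq:phidef2}. Substituting the definition \eqref{eq:alphadef} of $\alpha$ and using $D^{-1}(S_t) = B_t D^{-1}(X_t)$ (since $X_t = B_t S_t$), the $\alpha$-term becomes $\alpha_t(S_t,\tau_{t-}) = \nabla_s C_t(S_t,\tau_{t-}) + B_t C_t(S_t,\tau_{t-})D^{-1}(X_t)\rho(\tau_{t-})$, so that $\phi_t = \nabla_s C_t(S_t,\tau_{t-}) + \left(B_t C_t(S_t,\tau_{t-}) - V_{t-}\right)D^{-1}(X_t)\rho(\tau_{t-})$. It then remains to identify the coefficient $B_t C_t(S_t,\tau_{t-}) - V_{t-}$ with $G_{t-}$. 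Here I would compute the left limit with care: since $B$, $S$, $V$ are continuous and $C$ is continuous in $(t,s)$ by Proposition \ref{prop:density}, while $\tau$ is constant on the interval immediately preceding $t$ so that $\tau_s \to \tau_{t-}$ as $s\uparrow t$, one obtains $G_{t-} = \lim_{s\uparrow t}\left(B_s C_s(S_s,\tau_s)-V_s\right) = B_t C_t(S_t,\tau_{t-}) - V_{t-}$, which gives \eqref{eq:phidef2}.

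Finally, predictability of $\phi$ follows from left-continuity and adaptedness: in the form \eqref{eq:phidef} each factor --- $\alpha_t(S_t,\tau_{t-})$, $V_{t-}$, $D^{-1}(X_t)=D^{-1}(X_{t-})$ and $\rho(\tau_{t-})$ --- is left-continuous and adapted (using continuity of $X,S,V$, left-continuity of $\tau_{t-}$, and the joint continuity of $(t,s)\mapsto\alpha_t(s,i)$ from Proposition \ref{prop:density}), and any left-continuous adapted process is predictable. The computations are essentially mechanical; the only points requiring genuine care are the associativity step that merges $\int V_{u-}\,dM_u$ back into a single $dX$-integral, and the evaluation of the left limit $G_{t-}$, where one must track that $G$ inherits the jumps of $\tau$ even though all other ingredients are continuous. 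I expect the latter --- pinning down exactly which argument of $C$ survives in $G_{t-}$ --- to be the main, if modest, obstacle.
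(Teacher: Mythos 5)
Your proof is correct and takes essentially the same route as the paper: the paper's (much terser) argument likewise rewrites $\int_0^t V_{u-}\,dM_u$ as a $dX$-integral ``by definition of $M$'' to obtain \eqref{eq:Vdef2}, and deduces predictability from the fact that $\phi$ can be written entirely in terms of left limits since $S$ and $X$ are continuous. You simply make explicit the steps the paper leaves implicit --- the associativity of the stochastic integral, the identification $G_{t-}=B_tC_t(S_t,\tau_{t-})-V_{t-}$ linking \eqref{eq:phidef} to \eqref{eq:phidef2}, and the left-continuity-plus-adaptedness argument --- which is consistent with the paper's proof.
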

Equation \eqref{eq:Vdef2} shows that $V_{t}$ is the discounted
value at time $t$ of a portfolio with initial value $C_0(s,i)$ and
investment strategy $\phi$, so $G_{t}$ is the corresponding hedging
error at this period.
Recall that  $C_0 = E^{\tilde P}[B_T \Phi(S_T)]$ is a real option price, corresponding to the price under the variance-optimal  martingale measure $\tilde P$. In other words, if all agents in the economy
 minimize a mean-variance criterion for the hedging error under a RSGBM model, then the pricing kernel will be generated by the variance-optimal measure $\tilde P$.

It is interesting to note that
$\phi_{0}=\nabla_{s}C_{0}(s,i)$, but in general,
$\phi_{t}\neq\nabla_{s}C_{t}(S_{t},\tau_{t-})$, unless $G_{t-}=0$,
indicating perfect hedging.

\begin{RM}\label{rem:alphaest} Since
$\alpha_t(s,i)=\nabla_{s}C_{t}(s,i)+C_{t}(s,i)D^{-1}(s)\rho(i)$,
$i\in\{1,\ldots,l\}$, one can obtain an unbiased estimate of
$\alpha_{t}$ through simulations by using  an obvious extension of
the ``pathwise method'' in \citet{Broadie/Glasserman:1996}. More
precisely, if $\Phi$ is differentiable almost everywhere, then
\eqref{eq:Crep4new} yields
\begin{equation}\label{eq:deltaRSGBM}
\nabla_{s}C_{t}(s,i)=\beta_{i}(T-t)D^{-1}(s)E^{\inw P}\left[ D(S_{T})\nabla\Phi(S_{T})|S_{t}=s,\tau_{t}=i\right] ,
\end{equation}
so $\alpha_{t}$ is an expectation of a function of $S_{T}$. It follows
from \eqref{eq:phidef} and \eqref{eq:deltaRSGBM} that $\phi_{t}$
can also be estimated by Monte-Carlo methods.
\end{RM}
Next, we find an expression
for the hedging error $G$ of the investment strategy $(C_{0},\phi)$. It's proof is given in Appendix \ref{app:pfGrep}.
\begin{lem}\label{lem:Grep} The hedging error $G$ satisfies
\begin{equation}
G_{t}=-\int_{0}^{t}G_{u-}dM_{u}+\sum_{0<u\le t}B_u \Delta C_{u}(S_{u},\tau_{u})-\int_{0}^{t}B_{u}\tilde{\Lambda}_{T-u}C_{u}(S_{u},\tau_{u})du.\label{eq:Grep}
\end{equation}
\end{lem}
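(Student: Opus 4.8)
The plan is to compute $dG_t = d\bigl(B_tC_t(S_t,\tau_t)\bigr) - dV_t$ and to match it term by term against \eqref{eq:Grep}, the decisive simplification coming from the pricing PDE \eqref{eq:odegammanew}. First I would expand $dV_t$. From \eqref{eq:Vdef2} and \eqref{eq:phidef2}, $dV_t=\phi_t^\top dX_t$ with $\phi_t=\nabla_s C_t(S_t,\tau_{t-})+G_{t-}D^{-1}(X_t)\rho(\tau_{t-})$. By the definition \eqref{eq:M} of $M$, the second piece contracts to $G_{t-}\,dM_t$, which is destined to become the first term of \eqref{eq:Grep}. For the first piece I substitute the dynamics \eqref{eq:Xmart} together with $D(X_t)=B_tD(S_t)$ to obtain
\[
\nabla_s C_t^\top\,dX_t = B_t\,m(\tau_{t-})^\top D(S_t)\nabla_s C_t\,dt + B_t\,\nabla_s C_t^\top D(S_t)\sigma(\tau_{t-})\,dW_t .
\]

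Next I would apply It\^o's formula to $B_tC_t(S_t,\tau_t)$. Since $B$ is continuous and of bounded variation with $dB_t=-r_{\tau_t}B_t\,dt$, the product rule gives $d(B_tC_t)=-r_{\tau_t}B_tC_t\,dt + B_t\,dC_t(S_t,\tau_t)$. The pair $(S,\tau)$ has jumps only in $\tau$, and $S$ is continuous across such jumps, so It\^o's formula for the jump-diffusion $C_t(S_t,\tau_t)$, run under $P$ with $S$ following \eqref{eq:sde}, reads
\[
dC_t(S_t,\tau_t)=\bigl[\partial_tC_t+\L_{\tau_{t-}}C_t\bigr](S_t,\tau_{t-})\,dt + \nabla_s C_t(S_t,\tau_{t-})^\top D(S_t)\sigma(\tau_{t-})\,dW_t + dJ_t,
\]
where $\L_i$ is the objective-measure generator from \eqref{eq:L} and $J_t=\sum_{0<u\le t}\Delta C_u(S_u,\tau_u)$ collects the regime-driven jumps, which after multiplication by $B_t$ are exactly the middle term of \eqref{eq:Grep}.

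Subtracting the two expansions, the Brownian integrals cancel (both equal $B_t\nabla_s C_t^\top D(S_t)\sigma(\tau_{t-})\,dW_t$) and $-G_{t-}\,dM_t$ survives with the correct sign, leaving a drift proportional to $B_t$. In that drift I combine the $\L_{\tau_{t-}}C_t$ from $d(B_tC_t)$ with the $-m(\tau_{t-})^\top D(S_t)\nabla_s C_t$ coming from $-dV_t$; since $\L_i=\tilde\L_i+m(i)^\top D(s)\nabla$ by \eqref{eq:L} and \eqref{eq:riskneutral-BMnew}, these collapse to the risk-neutral $\tilde\L_{\tau_{t-}}C_t$. I then invoke \eqref{eq:odegammanew}, i.e.\ $\partial_tC_t+\tilde\L_{\tau_{t-}}C_t=r_{\tau_{t-}}C_t-\sum_j(\tilde\Lambda_{T-t})_{\tau_{t-},j}C_t(S_t,j)$, so that the remaining drift $-r_{\tau_t}B_tC_t+B_t(\partial_tC_t+\tilde\L_{\tau_{t-}}C_t)$ reduces to $-B_t\,\tilde\Lambda_{T-t}C_t(S_t,\tau_{t-})\,dt$, in the shorthand of \eqref{eq:Grep}. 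Integrating, noting $G_0=B_0C_0-V_0=0$, and replacing $\tau_{u-}$ by $\tau_u$ in the $du$-integral (which is insensitive to the countable jump set) yields \eqref{eq:Grep}.

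The main obstacle is the careful It\^o expansion of the jump-diffusion $C_t(S_t,\tau_t)$ together with the attendant bookkeeping: the formula must be run under the objective dynamics (drift $\mu$), whereas the PDE is expressed through the risk-neutral generator (drift $r$), so one has to verify that the mismatch $m(i)^\top D(s)\nabla C$ is precisely the drift produced by the $\nabla_s C_t^\top\,dX_t$ part of $dV_t$, leaving only the regime generator $\tilde\Lambda_{T-t}$ after cancellation. Checking the sign conventions, and that it is the \emph{uncompensated} jump sum (not its martingale part) that appears in \eqref{eq:Grep}, is the only delicate point.
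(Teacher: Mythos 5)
Your proof is correct and follows essentially the same route as the paper's: decompose $G_t = B_tC_t(S_t,\tau_t)-V_t$, expand both pieces by It\^o's formula for the jump-diffusion $(S,\tau)$, cancel the Brownian terms, identify $-G_{t-}\,dM_t$ via \eqref{eq:phidef2} and \eqref{eq:M}, and reduce the drift to $-B_t\tilde{\Lambda}_{T-t}C_t$ using the PDE \eqref{eq:odegammanew} and the relation $\L_i-\tilde{\L}_i=m(i)^\top D(s)\nabla$. The only (immaterial) difference is bookkeeping: the paper routes through the compensated martingales $M^{(C)}$, $M^{(g)}$ and an intermediate identity before unpacking $M^{(C)}$ into its Brownian, jump, and $\Lambda C$ parts, whereas you work directly with the uncompensated jump sum throughout.
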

The next result,  proven in  Appendix \ref{app:pfGSmart},  is the key for solving the hedging  problem.

\begin{lem}\label{lem:GSmart} For all $0\le t\le T$, $\gamma_{\tau_{t}}(T-t)G_{t}$
and $\gamma_{\tau_{t}}(T-t)X_{t}G_{t}$ are martingales. In
particular $E(G_{T})=0$, and for any stopping times $U,V$ with $0\le
U\le V\le T$,
\begin{equation}
E\left\{ G_{T}(X_{V}-X_{U})|\calf_{U}\right)=0.\label{eq:optcont}
\end{equation}
\end{lem}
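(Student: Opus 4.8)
The two conclusions $E(G_T)=0$ and \eqref{eq:optcont} both follow quickly once the two martingale statements are established, so the plan is to prove those first and harvest the corollaries at the end. Write $N_t=\gamma_{\tau_t}(T-t)$, and note $N_T=\gamma_{\tau_T}(0)=1$ by Lemma \ref{lem:gamma}, while $N_0G_0=\gamma_{\tau_0}(T)\,(B_0C_0(S_0,\tau_0)-V_0)=0$ since $V_0=C_0(s,i)$. Granting that $N_tG_t$ is a martingale, optional sampling gives $E[G_T\mid\calf_V]=E[N_TG_T\mid\calf_V]=N_VG_V$ for any stopping time $V\le T$; conditioning the left side of \eqref{eq:optcont} first on $\calf_V$ and using that $X_V-X_U$ is $\calf_V$-measurable yields $E[G_T(X_V-X_U)\mid\calf_U]=E[(X_V-X_U)N_VG_V\mid\calf_U]$. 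Splitting this sum and pulling the $\calf_U$-measurable $X_U$ out, the martingale property of $N_tX_tG_t$ gives $E[X_VN_VG_V\mid\calf_U]=N_UX_UG_U$ and that of $N_tG_t$ gives $E[N_VG_V\mid\calf_U]=N_UG_U$, so the difference is $X_UN_UG_U-X_UN_UG_U=0$. Taking expectations in the martingale property of $N_tG_t$ between $0$ and $T$ and using $N_T=1$ gives $E(G_T)=N_0G_0=0$.

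To prove $N_tG_t$ is a martingale, I would apply the product rule to $N_tG_t$, using the dynamics of $G$ from \eqref{eq:Grep} and the fact that $N_t=\gamma_{\tau_t}(T-t)$ is a pure-jump-plus-drift functional of $\tau$. Between jumps its drift is $\tfrac{d}{dt}\gamma_{\tau_t}(T-t)=-\dot\gamma_{\tau_t}(T-t)$, which by \eqref{eq:odegamma} equals $\ell_{\tau_t}N_t-\sum_j\Lambda_{\tau_t j}\gamma_j(T-t)$, and at a jump $\tau:i\to j$ it jumps by $\gamma_j(T-t)-\gamma_i(T-t)$. Since $M$ is continuous (so $G$ has continuous local-martingale part $-G_{t-}d\mathcal{M}_t$ and jumps only through $\Delta C$ at regime changes) and $N$ carries no Brownian part, the covariation $[N,G]$ is a pure-jump process supported on the jump times of $\tau$. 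Collecting the predictable finite-variation part of $d(N_tG_t)=N_{t-}dG_t+G_{t-}dN_t+d[N,G]_t$ together with the $\Lambda$-compensator of all the jumps, the terms proportional to $\ell_{\tau_t}N_{t-}G_{t-}$ cancel, the terms proportional to $G_{t-}\sum_j\Lambda_{\tau_t j}\gamma_j(T-t)$ cancel, and the remaining drift is
$$B_t\sum_{j\ne \tau_t}\Lambda_{\tau_t j}\,\gamma_j(T-t)\bigl[C_t(S_t,j)-C_t(S_t,\tau_t)\bigr]-N_t B_t\bigl(\tilde\Lambda_{T-t}C_t\bigr)(S_t,\tau_t).$$
Expanding $(\tilde\Lambda_{T-t}C_t)(S_t,\tau_t)=\sum_{j\ne\tau_t}(\tilde\Lambda_{T-t})_{\tau_t j}[C_t(S_t,j)-C_t(S_t,\tau_t)]$ and inserting the definition $(\tilde\Lambda_{T-t})_{\tau_t j}=\Lambda_{\tau_t j}\gamma_j(T-t)/\gamma_{\tau_t}(T-t)$ from \eqref{eq:lambdat} shows this last drift is exactly zero. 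Hence $N_tG_t$ is a local martingale, and the bounds $\gamma\le1$ of Lemma \ref{lem:gamma} together with the polynomial growth and moment estimates of Proposition \ref{prop:density} (and $G\in L^2(P)$) make the relevant predictable brackets integrable, upgrading it to a true martingale.

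The second martingale is then obtained cheaply from the first by writing $N_tX_tG_t=X_t\,(N_tG_t)$ and applying the product rule to the semimartingale $X$ and the martingale $A_t:=N_tG_t$. Because $X$ is continuous, $[X,A]$ has no jump part, and since $N$ has no Brownian part the continuous local-martingale part of $A$ is $-N_{t-}G_{t-}d\mathcal{M}_t=-N_{t-}G_{t-}\rho(\tau_{t-})^\top\sigma(\tau_{t-})dW_t$; using $dX_t=D(X_t)m(\tau_t)dt+D(X_t)\sigma(\tau_{t-})dW_t$ from \eqref{eq:Xmart} and $a(\tau_t)\rho(\tau_t)=m(\tau_t)$ from \eqref{eq:ell} gives $d[X,A]_t=-N_{t-}G_{t-}D(X_t)m(\tau_t)\,dt$. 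In $d(X_tA_t)=X_{t-}dA_t+A_{t-}dX_t+d[X,A]_t$ the first term is a martingale differential, and the drift of the last two is $A_{t-}D(X_t)m(\tau_t)\,dt+d[X,A]_t=N_{t-}G_{t-}D(X_t)m(\tau_t)\,dt-N_{t-}G_{t-}D(X_t)m(\tau_t)\,dt=0$, so $N_tX_tG_t$ is a local martingale and, by the same moment estimates as above, a true martingale.

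The main obstacle is the drift cancellation in the first martingale claim: it is exactly the place where the ODE \eqref{eq:odegamma} satisfied by $\gamma$ and the definition \eqref{eq:lambdat} of the twisted generator $\tilde\Lambda$ conspire, through the $\Lambda$-compensator of the regime jumps, to annihilate the otherwise-uncompensated drift $-N_tB_t\tilde\Lambda_{T-t}C_t$ appearing in \eqref{eq:Grep}. A secondary, purely technical point is the localization argument turning the local martingales into genuine martingales, which rests on Proposition \ref{prop:density} and Lemma \ref{lem:gamma}.
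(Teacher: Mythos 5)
Your proof is correct, and the cancellation you exhibit is exactly the one at the heart of the paper's own argument: the uncompensated drift $-N_tB_t\bigl(\tilde\Lambda_{T-t}C_t\bigr)(S_t,\tau_t)$ coming from \eqref{eq:Grep} is annihilated by the $\Lambda$-compensator of the joint jumps of $\gamma_{\tau_t}(T-t)$ and $C_t(S_t,\tau_t)$, precisely because of the ODE \eqref{eq:odegamma} and the definition \eqref{eq:lambdat} of $\tilde\Lambda$. The bookkeeping, however, differs. For the first martingale the paper routes the computation through the general bracket identity of Lemma \ref{lem:crochet}, applied to $f_t(s,i)=\gamma_{T-t}(i)$, identifying the terms $\mathcal{H}f_u+\partial_u f_u-f_u\ell$, $\mathcal{N}(f_u,g)$ and $\mathcal{N}(f_u,C_u)$; you apply the It\^{o} product rule to $N_tG_t$ directly and compensate the chain's jump measure by hand — essentially the same computation in different clothing. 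Where you genuinely diverge, and gain, is the second martingale: the paper redoes the entire generator computation with $f_t(s,i)=s_k\gamma_{T-t}(i)$, shows $R_t-\int_0^t r_{\tau_u}R_u\,du$ is a martingale, and only then concludes that $B_tR_t=X_t^{(k)}\gamma_{\tau_t}(T-t)G_t$ is one; you instead bootstrap from the already-proven martingale $A_t=N_tG_t$ via a product rule with the continuous semimartingale $X$, where continuity of $X$ kills the jump bracket and the identity $a(i)\rho(i)=m(i)$ kills the drift in one line. This is shorter and makes transparent why the interest-rate/discounting step never needs to appear. Two caveats, neither of which puts you below the paper's own standard of rigor: (i) the integrability needed to upgrade your local martingales to true martingales (including the parenthetical claim $G\in L^2(P)$) is asserted rather than proven, but the paper likewise passes from vanishing drift to martingality without comment; (ii) $X$ is $\R^d$-valued, so your product-rule computation and the statement $\gamma_{\tau_t}(T-t)X_tG_t$ should be read componentwise, as the paper does by fixing a coordinate $k$.
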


One can now state the other main result of the paper, proven in Appendix
\ref{pf:optcont}. Recall that the quadratic hedging error $\mathit{HE}$ is defined in \eqref{eq:problem}.

\begin{thm}\label{thm:optcont} The optimal solution of the quadratic
hedging problem for a regime-switching geometric Brownian motion is
given by $(C_{0},\phi)$, as defined by equation \eqref{eq:phidef}
and the actualized value of the associated portfolio satisfies \eqref{eq:Vdef}.
More precisely, for any $(\pi,\psi)\in\R\times\mathcal{A}$,
\begin{equation}\label{eq:optsol}
\mathit{HE}(\pi,\psi)  \ge  \mathit{HE}(C_{0},\phi),
\end{equation}
and $\mathit{HE}(\pi,\psi)  = \mathit{HE}(C_{0},\phi)$ if and only if $\pi=C_{0}$ and $\psi=\phi$ a.s. In particular, the solution to the optimization problem exists and is unique.
\end{thm}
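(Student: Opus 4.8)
The plan is to exploit the orthogonality relations packaged in Lemma \ref{lem:GSmart}. The starting point is the identification $\mathit{HE}(C_0,\phi)=E[G_T^2]$. Indeed, by the terminal condition $C_T(s,i)=\Phi(s)$ in \eqref{eq:odegammanew} and the representation $V_T=C_0(S_0,\tau_0)+\int_0^T\phi_u^\top dX_u$ from \eqref{eq:Vdef2}, one has $G_T=B_TC_T(S_T,\tau_T)-V_T=B_T\Phi(S_T)-C_0-\int_0^T\phi_u^\top dX_u$, which is precisely the discounted hedging error of the strategy $(C_0,\phi)$ appearing inside the square in \eqref{eq:problem}.

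Next, for an arbitrary $(\pi,\psi)\in\R\times\mathcal{A}$, set $c=C_0-\pi\in\R$ and $\zeta=\psi-\phi\in\mathcal{A}$ (a legitimate element since $\mathcal{A}$ is a linear space). Writing the integrand of \eqref{eq:problem} as
\[
B_T\Phi(S_T)-\pi-\int_0^T\psi_u^\top dX_u = G_T + \Big(c-\int_0^T\zeta_u^\top dX_u\Big),
\]
and expanding gives $\mathit{HE}(\pi,\psi)=E[G_T^2]+2E[G_T R]+E[R^2]$ with $R=c-\int_0^T\zeta_u^\top dX_u$. The heart of the argument is that the cross term vanishes: since $E(G_T)=0$ by Lemma \ref{lem:GSmart}, it suffices to prove $E\big[G_T\int_0^T\zeta_u^\top dX_u\big]=0$. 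For a simple strategy $\zeta=Y\I_{(T_1,T_2]}$ one has $\int_0^T\zeta_u^\top dX_u=Y^\top(X_{T_2}-X_{T_1})$, and conditioning on $\calf_{T_1}$ together with \eqref{eq:optcont} (applied with $U=T_1$, $V=T_2$) yields $E[G_TY^\top(X_{T_2}-X_{T_1})]=E[Y^\top E\{G_T(X_{T_2}-X_{T_1})\mid\calf_{T_1}\}]=0$. By linearity this extends to finite linear combinations, and then to all $\zeta\in\mathcal{A}$ by $L^2(P)$-density: if simple $\zeta^{(n)}$ satisfy $\int_0^T(\zeta^{(n)})^\top dX\to\int_0^T\zeta^\top dX$ in $L^2(P)$, Cauchy--Schwarz together with $G_T\in L^2(P)$ passes the identity to the limit. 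Hence $E[G_TR]=0$ and $\mathit{HE}(\pi,\psi)=E[G_T^2]+E[R^2]\ge E[G_T^2]=\mathit{HE}(C_0,\phi)$, which is \eqref{eq:optsol}.

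Equality forces $E[R^2]=0$, i.e.\ $\int_0^T\zeta_u^\top dX_u=c$ a.s. To deduce $\zeta=0$ and $c=0$ I would pass to $\tilde P$: under $\tilde P$ the process $X$ is a martingale by Lemma \ref{lem:changeofmeasure0}, so $N_t=\int_0^t\zeta_u^\top dX_u$ is a $\tilde P$-(local) martingale with $N_0=0$ and $N_T=c$. Taking $\tilde P$-conditional expectations gives $N_t=E^{\tilde P}[c\mid\calf_t]=c$ for all $t$, whence $c=N_0=0$ and $N\equiv0$. Then $[N,N]_T=\int_0^T\zeta_u^\top D(X_u)a(\tau_u)D(X_u)\zeta_u\,du=0$; since each $a(i)=\sigma(i)\sigma(i)^\top$ is positive definite (being invertible) and the prices $X_u^{(j)}$ are strictly positive, the integrand dominates $\lambda_{\min}(a(\tau_u))\,|D(X_u)\zeta_u|^2$ and vanishes only when $\zeta_u=0$. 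Thus $\zeta=0$ for a.e.\ $u$, $P$-a.s., giving $\psi=\phi$ and $\pi=C_0$ a.s.; existence is witnessed by $(C_0,\phi)$ itself.

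The main obstacle is not the algebra but the integrability bookkeeping. One must first secure $G_T\in L^2(P)$ and $\int_0^T\phi_u^\top dX_u\in L^2(P)$, so that $\phi$ is genuinely admissible and $\mathit{HE}(C_0,\phi)<\infty$; this relies on the moment bounds of Proposition \ref{prop:density} and the smoothness of $C$. One must then justify the $L^2(P)$-density passage in the cross-term computation and upgrade $N$ from a $\tilde P$-local martingale to a true $\tilde P$-martingale. These are precisely the points where the definition of $\mathcal{A}$ as an $L^2(P)$-closure and the square-integrability built into the class $\mathcal{Q}$ are used.
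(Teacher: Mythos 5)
Your proof is correct and takes essentially the same route as the paper: both arguments use the orthogonality relation \eqref{eq:optcont} from Lemma \ref{lem:GSmart} on simple strategies, extend it to all of $\mathcal{A}$ by linearity and $L^2(P)$-density to kill the cross term, obtain the decomposition $\mathit{HE}(\pi,\psi)=\mathit{HE}(C_{0},\phi)+E[R^2]$, and settle uniqueness by passing to $\tilde{P}$, under which $X$ is a martingale, so that $\pi=C_{0}$ and then $[N,N]_T=0$ forces $\psi=\phi$. If anything, your version of the bracket, $[N,N]_t=\int_0^t\zeta_u^{\top}D(X_u)a(\tau_u)D(X_u)\zeta_u\,du$, is written more carefully than the paper's displayed form, and your closing remarks on integrability make explicit what the paper leaves implicit.
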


\subsection{Particular cases}

\subsubsection{Geometric Brownian motion}

In this case, $\Lambda\equiv0$ and $C$ does  not depend on $\tau$, so by Lemma \ref{lem:Grep},
$G_{t}=-\int_{0}^{t}G_{u}dM_{u}$, with $G_{0}=0$. Since the solution
of \eqref{eq:Grep} is unique, $G\equiv0$,
proving the perfect hedging, as it is well-known for the
Black-Scholes model. Also, using \eqref{eq:phidef2}, one recovers
the  delta-hedging value $\phi_{t}=\nabla_{s}C_{t}(S_{t})$.

\subsubsection{Risk neutral measure}

To recover known results from the literature, suppose that $X_{t}=B_{t}S_{t}$
is a martingale. It then follows from \eqref{eq:Xmart} that $m\equiv0$,
so $\rho\equiv0$, $\ell\equiv0$, $\gamma\equiv1$, ${\tilde{\mathcal{H}}}_{t}=\cH$,
$M\equiv0$, and $Z\equiv1$. Next, we get from \eqref{eq:Crep3new},
\eqref{eq:alphadef} -- \eqref{eq:phidef} that
\begin{eqnarray*}
C_{t}(s,i) & = & E\left\{ B_{T-t}\Phi(S_{T-t})|S_{0}=s,\tau_{0}=i\right\} ,\\
\alpha_t(,s,i) & = & \nabla C_{t}(s,i),\\
V_{t} & = & C_0(s,i)+\int_{0}^{t}\alpha_u(S_{u},\tau_{u-})^{\top}dX_{u},\\
\phi_{t} & = & \alpha_t(S_{t-},\tau_{t-}).
\end{eqnarray*}

\section{Examples of application}\label{sec:application}

In this section,  we present two implementations of our proposed
methodology, using formulas \eqref{eq:Crep4new} and
\eqref{eq:deltaRSGBM}  together with Monte Carlo simulations to
estimate respectively the value of an option and the initial
investment in the risky asset. The simulations are based on Algorithm \ref{algo:MC} described in Appendix \ref{app:sim}.

\subsection{A first example}\label{ssec:shenetal}

In \citet{Shen/Fan/Siu:2014}, the authors consider a RSGBM with parameters
$\mu=(0.04,0.08)^{\top}$, $\sigma=(0.4,0.2)^{\top}$, $r=(0.02,0.04)^{\top}$
and generator $\Lambda=0.5\left(\begin{array}{rr}
-1 & 1\\
1 & -1
\end{array}\right)$.
Also, $S_{0}=100$, $T=1$. Note that under their risk neutral measure, the regime-switching process $\tau$ is still time-homogeneous.
In order to be able to make some comparisons with our model, only their SRS model is
chosen. Using inverse Fourier transforms, they obtained the value of call options given in Table \ref{tab:callShenetall}.

%
%
%

\begin{table}[h!]
\caption{Call option prices computed with a discretization of $2^{12}$ points
for the inverse Fourier transform \citep{Shen/Fan/Siu:2014}.}\label{tab:callShenetall}

\centering{}%
\begin{tabular}{|l|c|c|c|c|c|c|}
\hline
Strike & 70   & 80   & 90   & 100  &  110  &  120  \\
\hline
Regime 1 & 34.0904   & 26.7779    & 20.6144  & 15.6171      & 11.6953   & 8.6931     \\
\hline
Regime 2 & 33.1151   & 24.5557   & 17.1617    & 11.3358    & 7.1553      &  4.3873\\
\hline
 \end{tabular}
\end{table}

In order to implement our proposed methodology using Monte
Carlo simulations,  one has to find $\lambda =
\max_{i\in\{1,2\}}\sup_{t\in [0,1]} -\left(\inw
\Lambda_t\right)_{ii}$, as described in Algorithm \ref{algo:MC}.
Based on panel (a) in Figure \ref{fig:lambdatShen}, one can take
$\lambda=.5185$. Note also that the discounting factors
$\beta_{i}(T)$ are given by $0.9767$ for regime 1 and $0.9644$ for
regime 2. Their values are displayed in panel (b) of Figure
\ref{fig:lambdatShen}. Next, in Table \ref{tab:callMC}, 95\%
confidence intervals are computed for the call values and the
initial investment $\phi_0$ (number of shares), based on  $10^6$
simulated values of $\inw S_T$. Although the model proposed by \citet{Shen/Fan/Siu:2014} and our model are different under the
martingale measures, the option values obtained from both models are comparable.
However, one can see that under regime 1, our results are
significantly lower than those of \citet{Shen/Fan/Siu:2014}, while they are significantly larger for regime
2. Estimated call values and $\phi_0$ for the two regimes are
displayed in Figure \ref{fig:evalshen} for strike ranging
from $50$ to $150$.

\begin{figure}[h!]
\begin{centering}
(a)\includegraphics[scale=0.4]{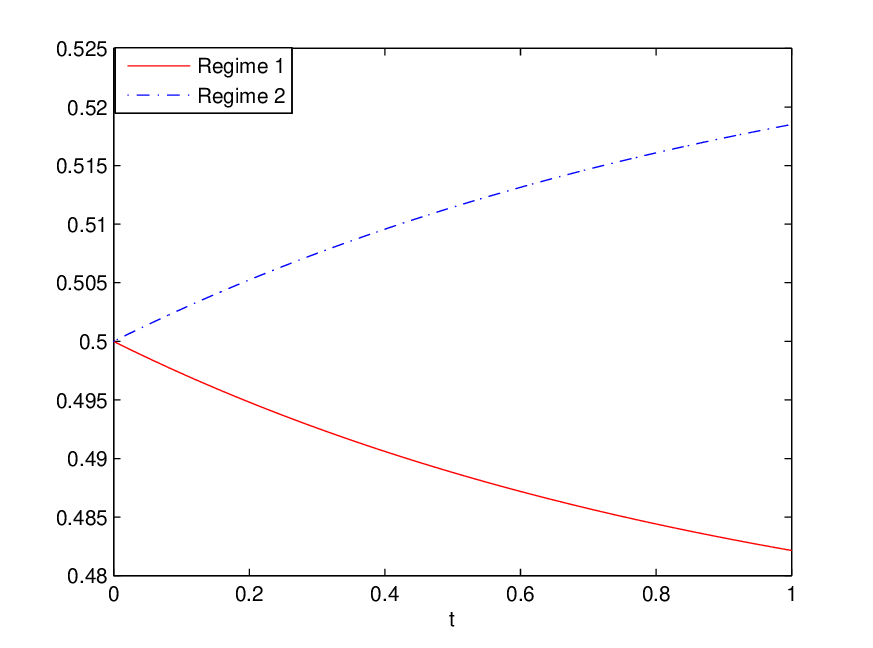}
(b)\includegraphics[scale=0.4]{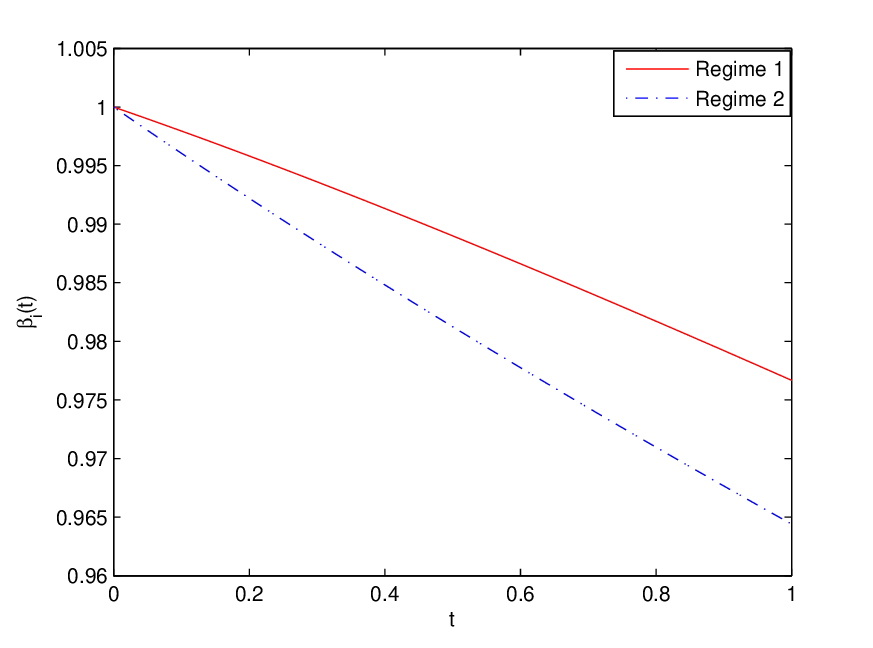}
\par\end{centering}
\caption{Graphs of $-\left(\inw{\Lambda}_{t}\right)_{ii}$ (panel a)
and $\beta_{i}(t)$ (panel b), for $i\in\{1,2\}$.}
\label{fig:lambdatShen}
\end{figure}

\begin{table}[h!]
\caption{Call option prices and initial investment values computed  by Monte Carlo
with $10^{6}$ simulations, using antithetic variables.}

\label{tab:callMC}

\begin{tabular}{|l|c|c|c|c|}
\hline
 & \multicolumn{2}{c|}{ Call value} & \multicolumn{2}{c|}{Initial value $\phi_{0}$}\tabularnewline
Strike  & Regime 1  & Regime 2  & Regime 1  & Regime 2\tabularnewline
\hline
 70  & $ 34.0580  \pm 0.0489$ &  $33.1345   \pm  0.0347 $  &  $ 0.8935  \pm   0.0007 $ & $  0.9547   \pm 0.0005  $  \\
 80  & $ 26.6933  \pm 0.0460$ &  $24.6400   \pm  0.0328 $  &  $ 0.8112  \pm   0.0008 $ & $  0.8863   \pm 0.0006  $  \\
 90  & $ 20.4806  \pm 0.0424$ &  $17.3062   \pm  0.0299 $  &  $ 0.7103  \pm   0.0009 $ & $  0.7672   \pm 0.0008  $  \\
100  & $ 15.4499  \pm 0.0384$ &  $11.5217   \pm  0.0262 $  &  $ 0.6005  \pm   0.0010 $ & $  0.6111   \pm 0.0008  $ \\
110  & $ 11.5158  \pm 0.0343$ &  $ \phantom{1} 7.3565   \pm  0.0222 $  &  $ 0.4931  \pm   0.0010 $ & $  0.4492   \pm 0.0009  $ \\
120  & $ \phantom{1} 8.5192  \pm 0.0303$ &  $ \phantom{1} 4.5829   \pm  0.0184 $  &  $ 0.3958  \pm   0.0009 $ & $  0.3095   \pm 0.0008  $ \\
\hline
\end{tabular}
\end{table}

\begin{figure}[h!]
\begin{centering}
(a)\includegraphics[scale=0.40]{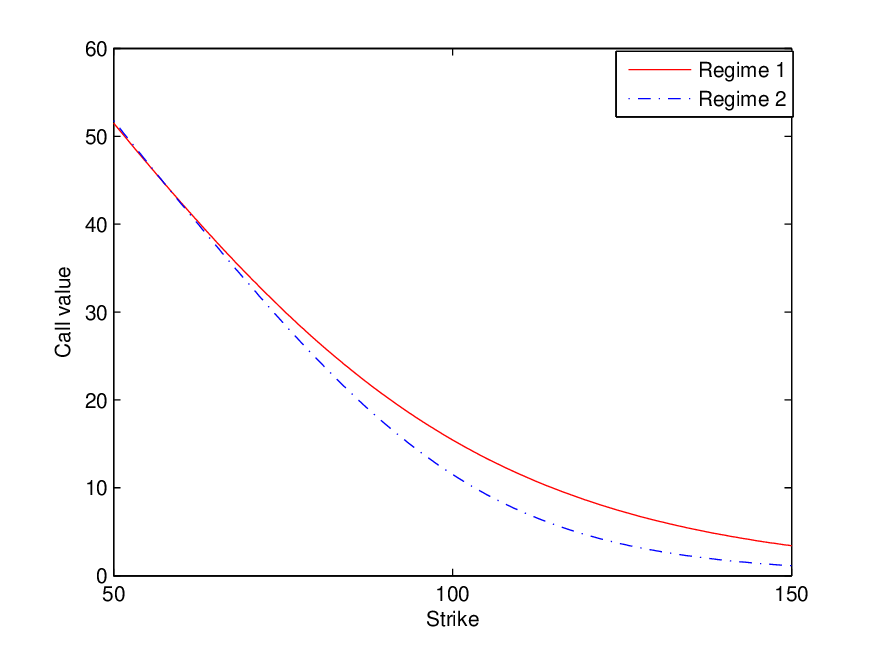}
(b)\includegraphics[scale=0.40]{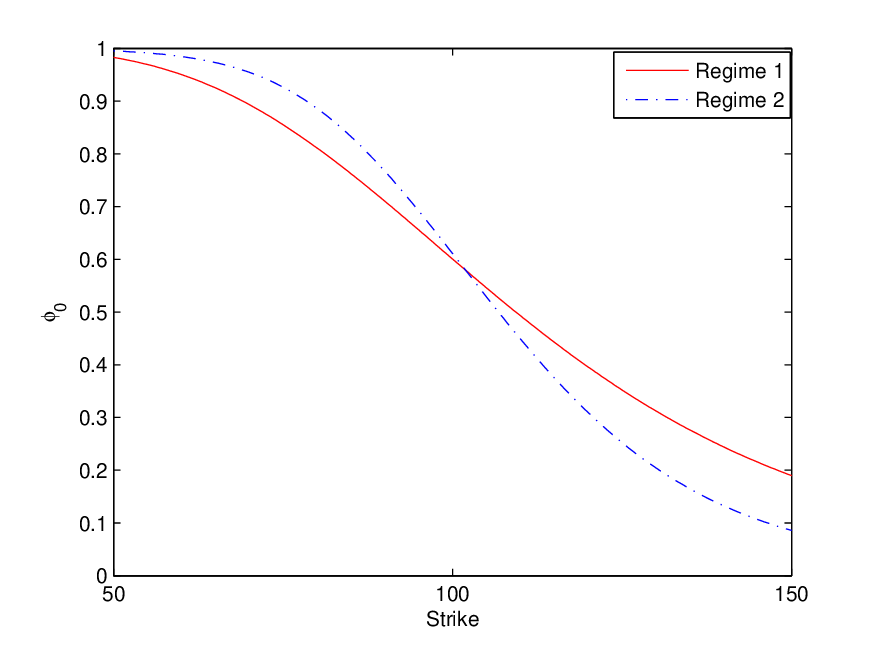}
\par\end{centering}

\caption{Graphs of $C(0,100,i)$ (panel a) and $\phi_0(i)$ (panel b)
for $i\in\{1,2\}$ and for strikes in $[50,150]$.}
\label{fig:evalshen}
\end{figure}

\begin{RM}
A clear advantage of our methodology over the one proposed by
\citet{Shen/Fan/Siu:2014} is that we have  an interesting
justification for the choice of the equivalent martingale measure.
Furthermore, our methodology is easy to implement for any number of
underlying assets while a discretization used to compute inverse
Fourier transforms will be almost impossible to implement for more
than three assets.
\end{RM}

\subsection{Second example}\label{ssec:ex-apple}
Next, we consider the log-returns of the stock price
of Apple (aapl), from January 2nd, 2010 to April 24th, 2015. In
order to estimate the parameters of the model,  we first fit a
regime-switching Gaussian random walk model on the returns,
following the methodology proposed in \citet[Chapter 10]{Remillard:2013}. In this case,
one has a discrete time Markov chain for the regimes, instead of a
continuous time one. Also, given a regime $i$ at period $t$, the
return at this period is Gaussian with a mean and variance depending
on the regime. It is easy to check that the continuous time limit in
this case is a regime-switching Brownian motion. According to
\citet{Remillard:2013} and
\citet{Remillard/Hocquard/Lamarre/Papageorgiou:2014}, the
appropriate number of regimes should be the first number so that one
does not reject the null hypothesis of a Gaussian regime-switching.
In the present case, the P-value for only one regime is 0\%, while
the P-value is $8\%$ for two regimes (using 1000 bootstrap samples).
The estimated parameters are given in Tables \ref{tab:param1-apple},
while the transition matrix is $Q = \left(
\begin{array}{cc}
 0.7600  &  0.2400\\
 0.0590 & 0.9410
 \end{array}\right)$.

 \begin{table}[h!]
\caption{Parameters estimation for the two-regime model on Apple
daily returns from 01/02/2010 to 04/23/2015. }
\label{tab:param1-apple}

\centering{}%
\begin{tabular}{c|c|c|c|c}
Regime  & Mean  & Volatility  & Stat. distr.  & Prob. of regime at
current time\tabularnewline \hline 1  & -0.0018   &  0.0283  &
0.1973 & 0.0441 \tabularnewline 2  &  0.0018   &  0.0123  & 0.8027
& 0.9559 \tabularnewline
\end{tabular}
\end{table}
To obtain the associated parameters of a RSGBM on an annual time
scale, one can solve $Q = \exp\{\Lambda/252\}$. Here, one gets $\Lambda
= \left(
\begin{array}{cc}
-71.8620 &   \phantom{-} 71.8620  \\
\phantom{-} 17.6661 &  -17.6661
 \end{array}\right)$.
 \begin{RM} Solving $Q = \exp\{\Lambda/252\}$ is not always
 possible. In practice,
 $\Lambda \approx 252(Q-I)$ is used. However, the relationship $Q =
 \exp\{\Lambda/252\}$  no longer holds and their could be  discrepancies for option values in the discrete and continuous cases.
\end{RM}
To find the remaining parameters in continuous time (measured in
years), one can  use the fact that for regime $i$, the mean of the
regime-switching random walk is approximately
$(\mu_i-\sigma_i^2/2)/252$, while its variance is  approximately
$\sigma_i^2/252$. The resulting parameters  are given in Table
\ref{tab:paramcont1}. One can then interpret regime 1 as a ``bad''
regime, since $\mu_1<0$ is negative and $\sigma_1$ is  twice
the value in regime 2, and $\mu_2>0$.
Fortunately, it follows from the stationary distribution that the
``good'' regime appears 80\% of the time. Also, there is a 95.5\%
chance that the current regime on April 24th, 2015, is the good one.
Finally, the estimated (annual) volatility is $.2658$, and the
annual rate, based on the 1-month Libor, is 2.16\%.
\begin{table}[h!]
\caption{Parameters for the RSGBM.} \label{tab:paramcont1}

\centering{}%
\begin{tabular}{c|c|c}
Regime  & $\mu$  & $\sigma$  \tabularnewline
\hline
1  &  -0.3436  & 0.4486  \tabularnewline
2  &   0.4813  & 0.1945  \tabularnewline
\end{tabular}
\end{table}
In order to compare some models, one  will use the values of Table
\ref{tab:strikesapple}, giving the bid/ask  of call options on
Apple, expiring at $T=20/252$, for three strike prices. As one can
see from Table \ref{tab:strikesapple}, the Black-Scholes model
under-estimate the market values.
\begin{table}[h!]
\caption{Market values, Black-Scholes price (BS) and delta-hedging
value ($\phi_0$) for a call on Apple, as of April 24th 2015. The
maturity day is May 22nd 2015 and the observed stock value is
$129.95$. } \label{tab:strikesapple}
\centering{}%
\begin{tabular}{|l|c|c|c|c|}
\hline
Strike & Bid  & Ask  & BS & $\phi_0$\\
\hline
$128$ & $5.50$ & $5.65$ & $5.0304 $ & $0.6034$    \\
$129$ & $4.95$ & $5.10$ & $4.4776 $ & $0.5629$   \\
$130$ & $4.45$ & $4.50$ & $3.9658 $ & $0.5220$   \\
\hline
\end{tabular}
\end{table}
Next, as a first approximation, we can compute the call option
prices and initial investment values using the semi-exact method for
optimal discrete time hedging, as described in Chapter 3 of
\citet{Remillard:2013}. See also \citet{Remillard/Rubenthaler:2013}.
The values corresponding to a daily hedging are given in Table
\ref{tab:callappleHMM}. It is worth noting that the Black-Scholes
prices are larger than the regime 2 prices, and smaller than the
regime 1 values.
\begin{table}[h!]
\caption{Call option prices and initial investment values using the
semi-exact method optimal hedging method as in Chapter 3 of
\citet{Remillard:2013}. The number of hedging periods is 20 and the
grid is composed of 2000 equidistant points in the interval
$[50,200]$.}\label{tab:callappleHMM}

\begin{tabular}{|l|c|c|c|c|}
\hline
 & \multicolumn{2}{c|}{ Call value} & \multicolumn{2}{c|}{Initial value $\phi_{0}$}\tabularnewline
Strike  & Regime 1  & Regime 2  & Regime 1  & Regime 2   \tabularnewline
\hline
$ 128 $ & $ 5.3981 $ &  $ 4.9069  $ &  $ 0.5951$ & $ 0.6166  $   \\
$ 129 $ & $ 4.8430 $ &  $ 4.3385  $ &  $ 0.5578$ & $ 0.5748  $   \\
$ 130 $ & $ 4.3268 $ &  $ 3.8141  $ &  $ 0.5202$ & $ 0.5321  $   \\
\hline
\end{tabular}
\end{table}
Finally, we can find the option values provided by the optimal
hedging for the RSGBM. Since the interest rate is not assumed to
depend on the regime in this example, one gets $\tilde \Lambda =
\inw\Lambda$. Its graph is displayed in Figure
\ref{fig:lambdatapple}. From it, one see that one can take
$\lambda=72.2522$. The 95\% confidence intervals for the call value
and the initial number of shares $\phi_0$, based on $10^6$
simulations of $\tilde S_T$,  are given in Table
\ref{tab:callMCapple}. The resulting values for strike prices
ranging from $125$ to $150$ are displayed in Figure
\ref{fig:callapple}.
Comparing Tables \ref{tab:callappleHMM} and \ref{tab:callMCapple},
it seems that as the number of hedging periods tends to infinity,
the option value under regime 1 decreases while it increases under
regime 2. Also, the Black-Scholes values appear significantly larger
than those of regime 1 and regime 2.

\begin{figure}[h!]
\begin{centering}
\includegraphics[scale=0.40]{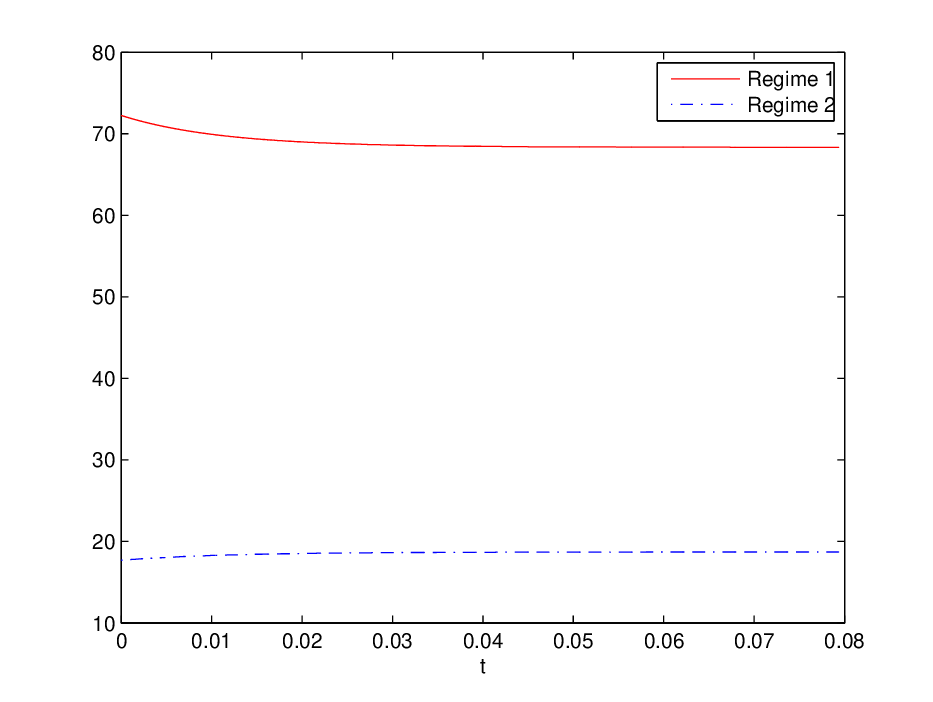}
\end{centering}

\caption{Graph of $-\left(\tilde{\Lambda}_{t}\right)_{ii}$ for
$i\in\{1,2\}$ and $t\in[0,1/12]$.} \label{fig:lambdatapple}
\end{figure}

\begin{table}[h!]
\caption{Call option prices and initial investment values computed  by Monte Carlo
with $10^{6}$ simulations, using antithetic variables.}\label{tab:callMCapple}

\begin{tabular}{|l|c|c|c|c|c|c|}
\hline
 & \multicolumn{2}{c|}{ Call value} & \multicolumn{2}{c|}{Initial value $\phi_{0}$}\tabularnewline
Strike  &  Regime 1  & Regime 2   & Regime 1  & Regime 2\tabularnewline
\hline
$ 128$  & $ 5.0210    \pm 0.0094 $ &  $ 4.9813  \pm  0.0094  $&  $0.6070   \pm  0.0007  $ & $ 0.6092   \pm 0.0007   $  \\
$ 129$  & $ 4.4653    \pm 0.0090 $ &  $ 4.4236  \pm  0.0090  $&  $0.5648   \pm  0.0007  $ & $ 0.5659   \pm 0.0007   $  \\
$ 130$  & $ 3.9523    \pm 0.0085 $ &  $ 3.9097  \pm  0.0085  $&  $0.5222   \pm  0.0007  $ & $ 0.5222   \pm 0.0007   $  \\
\hline
\end{tabular}
\end{table}

\begin{figure}[h!]
\begin{centering}
(a)\includegraphics[scale=0.40]{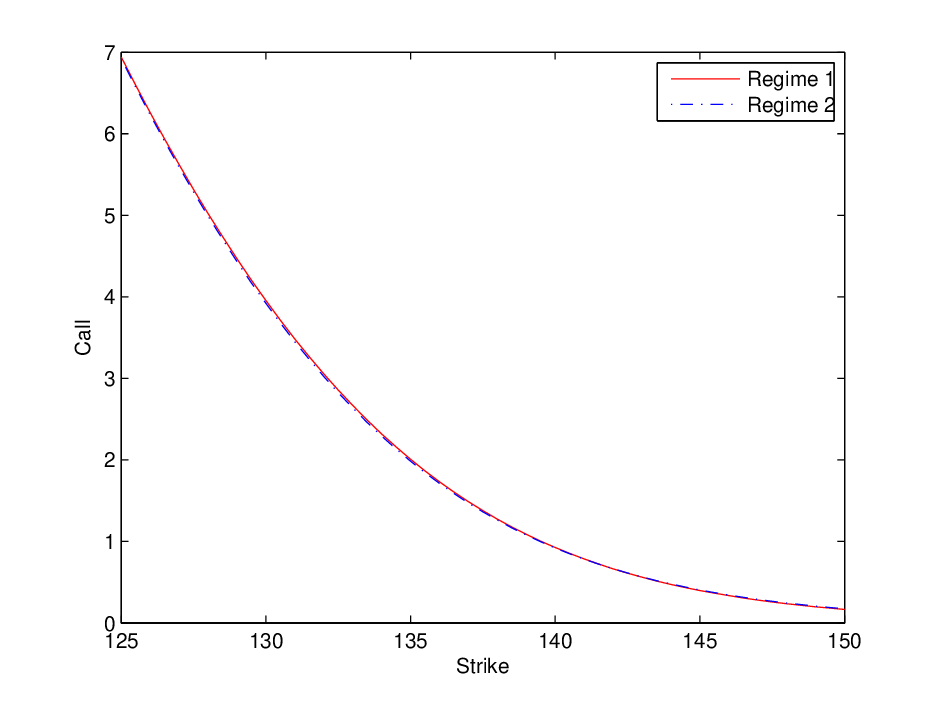}
(b)\includegraphics[scale=0.40]{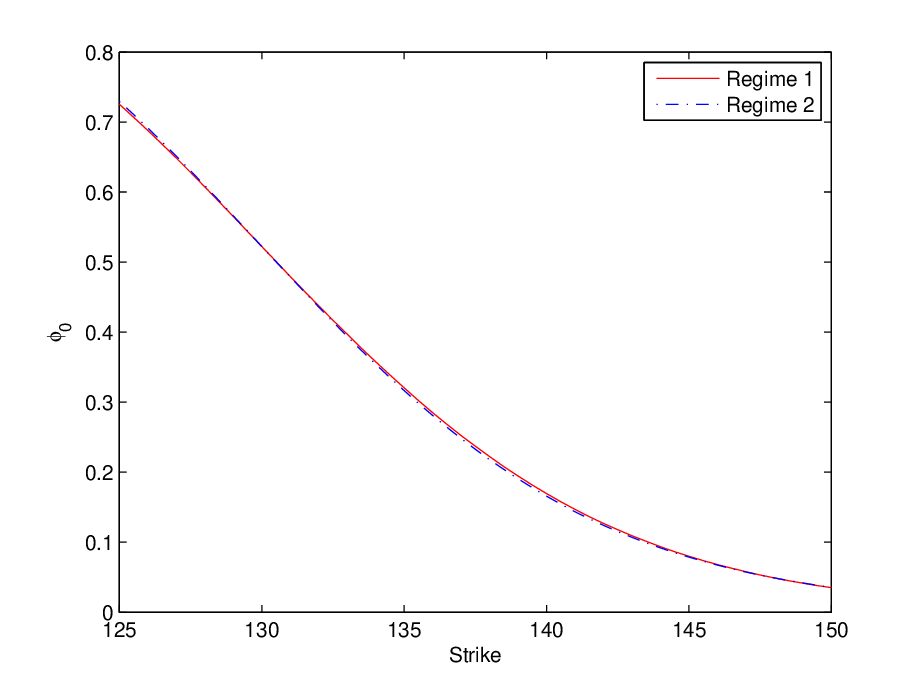}
\par\end{centering}
\caption{Graphs of $C(0,100,i)$ (panel a) and $\phi_0(i)$ (panel b)
for $i\in\{1,2\}$ and for strike values in $[125,150]$.}
\label{fig:callapple}
\end{figure}
Note that under-estimating the option value is not necessarily a bad
thing.  As proposed in \citet{Remillard:2013}, since the market
values are all larger, one could short the call option, invest $C_0$
(as computed in Table \ref{tab:callMCapple}) in a portfolio based on
optimal hedging for
 the RSGBM model, and invest the remaining value in a risk-free account. If
transactions fees were negligible, one could make on average the
difference between the initial option prices. This  was
tested on real data in
\citet{Remillard/Hocquard/Lamarre/Papageorgiou:2014}.

\section{Conclusion}


In this paper we presented the variance-optimal equivalent martingale measure $\tilde P$ and we found the optimal hedging solution for pricing
and hedging an option under a regime-switching geometric Brownian
motion model,  extending the Black-Scholes model. The option price
and  the optimal strategy can be deduced from $\tilde P$,  whose law is easy to simulate.  Compared to some
existing methods relying on inverse Fourier transforms, the proposed
methodology has the advantage that it is easy to implement for any
number of underlying assets.

\bibliographystyle{apalike}



\appendix

\section{Construction of inhomogeneous Markov chains}
\label{app:sim}

Suppose that the (inhomogeneous) Markov chain $\tau_{t}$ has
infinitesimal generator $\Lambda_{t}$, i.e., for any $j\neq i$,
$\displaystyle
\lim_{h\downarrow0}\frac{1}{h}P(\tau_{t+h}=j|\tau_{t}=i)=(\Lambda_{t})_{ij}$.
For a given $T$, assume that one can find $0<\lambda$ so that
$-(\Lambda_{t})_{ii}\le\lambda$, for any $i\in \{1,\ldots,l\}$ and
any $t\in [0,T]$.
\begin{algo}\label{algo:MC}
To construct the Markov chain $\tau$, do the following:
\begin{itemize}
\item Generate $N\sim{\rm Poisson}(\lambda T)$.
\item If $N=n$, generate independent uniform variates $U_{1},\ldots,U_{n}$
and order them. Denote the resulting sample by
$U_{n:1},\ldots,U_{n:n}$, with $U_{n:1}<U_{n:2}\cdots<U_{n:n}$. This
can be done by generating $n+1$ independent exponential variates
$E_{1},\ldots,E_{n+1}$ and by setting
$U_{n:i}=\frac{\sum_{j=1}^{i}E_{j}}{\sum_{j=1}^{n+1}E_{j}}$,
$i\in\{1,\ldots,n\}$.
\item Set $t_{i}=T\times U_{n:i}$, $i=1,\ldots,n$. These values are the
possible transition times. Further set $t_{0}=0$ and $t_{n+1}=T$.
\item For $k=1,\ldots,n$, if $\tau_{t_{k-1}}=i$, then $\tau_{t_{k}}=j$
with probability $P_{k,ij}$, where
$
P_{k,ij}=(\Lambda_{t_{k}})_{ij}/\lambda$ for $j\neq i$, and
$P_{k,ii}=1+(\Lambda_{t_{k}})_{ii}/\lambda$.
\end{itemize}
\end{algo}

\subsection{Proof of Proposition \ref{prop:density}}\label{app:density}
To show that the density of
$S_t$ is infinitely differentiable with respect to $s$ and $t$, we use Algorithm \ref{algo:MC}, together with representation \eqref{eq:sdesol}.
Conditional on $N_t=n$ and $\mathcal{F}_T^\tau$,  the returns $R_t^{(i)} =
\log\left(S_t^{(i)}/s^{(i)}\right)$, $i\in \{1,\ldots,d\}$, are
Gaussian, with mean $tV_t$, with $ V_t = \frac{1}{t}\int_0^t
v_{\tau_u} du $ and covariance matrix $tA_t$, with $A_t =
\frac{1}{t}\int_0^t a_{\tau_u}du $, where $ V_t = \sum_{k=1}^n
\left(U_{n:k}-U_{n:k-1}\right) v(\tau_{t_{k-1}})+
\left(1-U_{n:n}\right)v(\tau_{t_n})$  and
$
A_t = \sum_{k=1}^n  \left(U_{n:k}-U_{n:k-1}\right)a(\tau_{t_{k-1}})+
(1-U_{n:n})a(\tau_{t_n})$. Here, $U_{n:0}=0$.
 Next, set if $\alpha_0=i$ and $u_0=0$.  Then, for any
bounded measurable functions $\phi, \psi$, given $N_t=n$, one gets
\begin{eqnarray*}
E\{\phi(V_t)\psi(A_t)\}&=& n!  \sum_{\alpha_1=1}^l \cdots
\sum_{\alpha_n=1}^l \int_{0<u_1< \cdots <u_n < 1} \prod_{k=1}^n
g_{\alpha_{k-1}
\alpha_k}\left(\frac{\Lambda_{t u_k}}{\lambda}\right)\\
&& \quad  \times \phi\left\{\sum_{k=1}^n
(u_k-u_{k-1})v(\alpha_{k-1})+ (1-u_n)v(\alpha_{n})\right\}\\
&& \qquad  \times \psi\left\{\sum_{k=1}^n
(u_k-u_{k-1})a(\alpha_{k-1})+ (1-u_n)a(\alpha_{n})\right\}du_1
\cdots d u_n,
\end{eqnarray*}
where $g_{ij}(x) = x_{ij}$ when $j\neq i$, and $g_{ii}(x) =
1+x_{ii}$, $i\in \{1,\ldots,l\}$. Hence, if $\Lambda$ is
continuously differentiable with respect to $t$, then it follows
that the density of $S_t$ is infinitely differentiable with respect
to $s$, as well as continuously differentiable with respect to $t$.
Moreover these derivatives are all integrable.
\qed

\section{Auxiliary results}



\begin{thm}[Feynman-Kac formula]\label{thm:FK} Let $\tau$ be a continuous Markov chain on $\{1,\ldots,l\}$, with infinitesimal generator $\Lambda$, and suppose that $f, V\in \R^l$. Then
\begin{equation}\label{eq:F-K}
u_t(i)  = E_{i}\left[ f(\tau_{t}) e^{\int_{0}^{t}V(\tau_{u})du}\right], \quad i\in \{1,\ldots,l\},
\end{equation}
is the unique solution of
\begin{equation}\label{eq:odeFK}
\dot u_t(i) = \partial_{t}u_t(i)= (\Lambda u_t)(i) +V(i)u_t(i), \quad u_0(i)=f(i).
\end{equation}
\end{thm}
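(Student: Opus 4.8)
The plan is to treat existence and uniqueness separately, exploiting that the state space is finite so that \eqref{eq:odeFK} is a genuine linear system of ordinary differential equations. Writing $u_t=(u_t(1),\ldots,u_t(l))^\top$ and $A=\Lambda+D(V)$, where $D(V)$ denotes the diagonal matrix with entries $V(i)$, equation \eqref{eq:odeFK} reads $\dot u_t = A u_t$ with $u_0=f$. By the standard theory of linear systems with constant coefficients, this Cauchy problem has exactly one solution on $[0,\infty)$, namely $w_t=e^{tA}f$. In particular the \emph{uniqueness} assertion is immediate, and it remains only to identify the probabilistic formula \eqref{eq:F-K} with $w$, which simultaneously gives \emph{existence}.

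To do this I would fix a terminal time $T>0$, set $Y_s=\int_0^s V(\tau_r)\,dr$, and introduce
\[
M_s = e^{Y_s}\, w_{T-s}(\tau_s), \qquad 0\le s\le T,
\]
and show it is a martingale. Applying the martingale property of the generator stated after \eqref{eq:L} to the time-dependent function $g(s,i)=w_{T-s}(i)$, for which $\partial_s g(s,i)=-\dot w_{T-s}(i)$, yields that $w_{T-s}(\tau_s)-\int_0^s\{-\dot w_{T-r}(\tau_r)+(\Lambda w_{T-r})(\tau_r)\}\,dr$ is a martingale $N$. Since $Y$ is a continuous finite-variation process with $dY_s=V(\tau_s)\,ds$, the product rule (with no covariation term) gives
\[
dM_s = e^{Y_s}\big[V(\tau_s)w_{T-s}(\tau_s)-\dot w_{T-s}(\tau_s)+(\Lambda w_{T-s})(\tau_s)\big]\,ds + e^{Y_s}\,dN_s.
\]
By the ODE, $\dot w_{T-s}=A w_{T-s}=\Lambda w_{T-s}+D(V)w_{T-s}$, so the bracketed drift vanishes identically and $M$ is a local martingale; because the state space is finite, $V$ is bounded and $w$ is continuous on $[0,T]$, the factor $e^{Y_s}$ is bounded and $M$ is in fact a bounded, hence genuine, martingale.

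Taking expectations in $E_{i}[M_T]=E_{i}[M_0]$, and using $M_0=w_T(i)$ together with $w_0=f$, gives
\[
E_{i}\big[f(\tau_T)\,e^{Y_T}\big] = w_T(i).
\]
As $T>0$ was arbitrary, this shows the expectation in \eqref{eq:F-K} equals $w_t(i)$ for every $t$, so \eqref{eq:F-K} is precisely the unique solution of \eqref{eq:odeFK}. I expect the only delicate point to be the rigorous justification of the martingale step — invoking the generator's martingale characterization for the explicitly time-dependent $g(s,i)=w_{T-s}(i)$ and confirming via boundedness that the resulting local martingale is a true one — but this is routine in the present finite-state, bounded-coefficient setting. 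An entirely elementary alternative avoiding stochastic calculus would be to condition on $\calf_h$, use the Markov property to write $u_{t+h}(i)=E_{i}[e^{Y_h}u_t(\tau_h)]$, and expand to first order in $h$: the no-jump event contributes $(1+\Lambda_{ii}h+V(i)h)u_t(i)$ and single jumps contribute $\sum_{j\neq i}\Lambda_{ij}h\,u_t(j)$, recovering \eqref{eq:odeFK} directly.
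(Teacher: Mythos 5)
Your proof is correct, but the heart of your argument differs from the paper's. Both proofs handle uniqueness identically (the system \eqref{eq:odeFK} is a finite linear ODE with constant coefficients, solved by $e^{t\{\Lambda+D(V)\}}f$). For the identification step, however, the paper works \emph{forward}: it defines $u$ by \eqref{eq:F-K}, uses the Markov property to write $u_{t+h}(i)=E_{i}\bigl[u_t(\tau_h)e^{\int_0^h V(\tau_s)ds}\bigr]$, and expands to first order in $h$ by conditioning on the number of jumps in $[0,h]$ (using $P_i(\zeta>s)=e^{s\Lambda_{ii}}$ and that two or more jumps has probability $o(h)$), thereby showing the probabilistic formula satisfies the ODE --- exactly the ``entirely elementary alternative'' you sketch in your last sentence. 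Your main argument goes \emph{backward}: starting from the ODE solution $w$, you build the process $M_s=e^{Y_s}w_{T-s}(\tau_s)$, kill its drift using the ODE, and conclude $w_T(i)=E_i[f(\tau_T)e^{Y_T}]$ from the martingale identity. Your route is a clean verification argument whose drift cancellation is immediate and which generalizes readily (e.g.\ to time-dependent $\Lambda_t$ or $V_t$), but it leans on the martingale characterization of the generator for \emph{time-dependent} functions $g(s,i)=w_{T-s}(i)$, which the paper only states after \eqref{eq:L} for time-independent $f$; you correctly flag this as the delicate point, and in the finite-state, bounded-coefficient setting the extension (via the space--time process, or directly by the same jump expansion the paper uses) is indeed routine, as is your boundedness argument upgrading the local martingale to a true one. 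The paper's expansion buys self-containedness with no stochastic-calculus machinery; yours buys brevity and robustness of the cancellation mechanism.
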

\begin{proof}
First, $u$ exists and is unique since \eqref{eq:odeFK} is a system of finite linear ode's. In fact, $u_t = e^{t\{\Lambda+D(v)\}} f$. Next, let $u$ be defined by \eqref{eq:F-K}.
Then, $u$ is bounded, $u_0 = f$, and for any $h\ge0$, since $\tau$ is Markov, $u_{t+h}(i)=E_{i}\left[ u_t(\tau_{h})e^{\int_{0}^{h}V(\tau_{s})ds}\right]$, $i\in \{1,\ldots,l\}$.
Next, if $\zeta = \inf\{s>0;\tau_s \neq i\}$, then $P_i(\zeta>s) = e^{s\Lambda_{ii}}$ and the event that there are at least two state changes in the time interval $[0,h]$ is $o(h)$. As a result,
\begin{eqnarray*}
u_{t+h}(i) &=& \left(1+\Lambda_{ii}h+o(h)\right)u_t(i)e^{hV(i)}+\sum_{j\neq i}\Lambda_{ij} u_t(j)\int_0^h  e^{s\Lambda_{ii}} e^{ sV(i)+(h-s)V(j)}ds+o(h)\\
&=& u_t(i) + h V(i)u_t(i) +h (\Lambda  u_t)_i +o(h).
\end{eqnarray*}
Hence, $u$ satisfies \eqref{eq:odeFK}.
\end{proof}

\subsection{Proof of Lemma \ref{lem:gamma}}
\label{app:lemgamma}
Let $i\in\{1,\ldots,l\}$ be given. We only prove the statements for $\gamma$.  First,
 by the uniqueness of \eqref{eq:odegamma} and the Feynman-Kac formula
in Theorem \ref{thm:FK}, one obtains  that for any $i\in\{1,\ldots,l\}$,
$\gamma_{i}(t)=E_{i}\left\{
e^{-\int_{0}^{t}\ell_{\tau_{u}}du}\right\} $, so $\gamma_{i}(t)\le1$
and $\gamma_{i}(t)>0$. Next,  $\frac{d}{dt}\left\{
\gamma_{i}(t)e^{(\ell_{i}-\Lambda_{ii})t}\right\}
=e^{(\ell_{i}-\Lambda_{ii})t}\sum_{j\neq
i}\Lambda_{ij}\gamma_{j}(t)$. As a result, for all $t\ge0$
$\frac{d}{dt}\left\{
\gamma_{i}(t)e^{(\ell_{i}-\Lambda_{ii})t}\right\} \ge0$, so
$\gamma_{i}(t)\ge e^{(\Lambda_{ii}-\ell_{i})t}$.
\qed

\begin{lem}\label{lem:crochet} Let $(x_{t})$ be a Markov process
in $\R^{l}$ with infinitesimal generator $\L$ and set $\mathcal{L}_tf_t  = \partial_t f_t+ \L f_t$. Suppose
that $M_{t}^{(f)}=f_t(x_{t})-f_0(x_{0})-\int_{0}^{t}\L_u f_u(x_{u})du$,
$M_{t}^{(g)}=g_t(x_{t})-g_0(x_{0})-\int_{0}^{t}\L_u g_u(x_{u})du$, $M_{t}^{(fg)}=f_t(x_t)g_t(x_{t})-f_0(x_0)g_0(x_{0})-\int_{0}^{t}\L_u (f_ug_u)(x_{u})du$
 are martingales.
Then, $
M_{t}^{(f,g)}=M_{t}^{(f_ug_u)}-\int_{0}^{t}g_u(x_{u})dM_{u}^{(f)}-\int_{0}^{t}f_u(x_{u})dM_{u}^{(g)}$
is a martingale and $
M_{t}^{(f,g)}=\left[M^{(f)},M^{(g)}\right]_{t}-\int_{0}^{t}\{\L_u(f_ug_u)-f_u\L_u
g_u-g_u\L_u f_u\}(x_{u})du$.
\end{lem}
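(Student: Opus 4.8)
The plan is to derive the second (bracket) identity from the integration-by-parts formula for semimartingales and then to read the martingale property of $M^{(f,g)}$ off its very definition. First I would record that the hypotheses furnish explicit semimartingale decompositions of $f_\cdot(x_\cdot)$ and $g_\cdot(x_\cdot)$: rearranging the definitions of $M^{(f)}$ and $M^{(g)}$ gives
\[
f_t(x_t)=f_0(x_0)+M_t^{(f)}+\int_0^t \L_u f_u(x_u)\,du,\qquad g_t(x_t)=g_0(x_0)+M_t^{(g)}+\int_0^t \L_u g_u(x_u)\,du,
\]
so each is a (local) martingale plus a continuous, in fact absolutely continuous, finite-variation part.

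Next I would apply the product rule $Y_tZ_t=Y_0Z_0+\int_0^t Y_{u-}\,dZ_u+\int_0^t Z_{u-}\,dY_u+[Y,Z]_t$ (see \citet[Cor.~2, p.~68]{Protter:2004}) with $Y_t=f_t(x_t)$ and $Z_t=g_t(x_t)$. Because the finite-variation parts above are continuous, they contribute nothing to the covariation, whence $[Y,Z]_t=[M^{(f)},M^{(g)}]_t$. Splitting $dY_u=dM_u^{(f)}+\L_u f_u(x_u)\,du$ and $dZ_u=dM_u^{(g)}+\L_u g_u(x_u)\,du$ and using the predictable integrands $Y_{u-}=f_{u-}(x_{u-})$, $Z_{u-}=g_{u-}(x_{u-})$, the product rule becomes
\[
f_tg_t(x_t)-f_0g_0(x_0)=\int_0^t f_{u-}\,dM_u^{(g)}+\int_0^t g_{u-}\,dM_u^{(f)}+\int_0^t\{f_u\L_u g_u+g_u\L_u f_u\}(x_u)\,du+[M^{(f)},M^{(g)}]_t.
\]
Subtracting $\int_0^t\L_u(f_ug_u)(x_u)\,du$ from both sides turns the left-hand side into $M_t^{(fg)}$, and after moving the two stochastic integrals across I would obtain exactly
\[
M_t^{(f,g)}=M_t^{(fg)}-\int_0^t g_u(x_u)\,dM_u^{(f)}-\int_0^t f_u(x_u)\,dM_u^{(g)}=[M^{(f)},M^{(g)}]_t-\int_0^t\{\L_u(f_ug_u)-f_u\L_u g_u-g_u\L_u f_u\}(x_u)\,du,
\]
which is the second assertion (the replacement of $f_{u-}(x_{u-})$ by $f_u(x_u)$ in the integral against $M^{(f)}$, and likewise for $g$, is harmless since the two integrands agree off the countable jump set).

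Finally, the martingale property in the first assertion I would read directly off the defining expression $M^{(f,g)}=M^{(fg)}-\int g_u(x_u)\,dM_u^{(f)}-\int f_u(x_u)\,dM_u^{(g)}$: the first term is a martingale by hypothesis, while the two stochastic integrals are integrals of predictable processes against martingales, hence local martingales, and under the integrability in force here (the integrands have the moments making the integrals square-integrable) they are genuine martingales; thus so is their combination $M^{(f,g)}$. The main obstacle I expect is the bookkeeping around the jumps of $x$: one must use the left-limit (predictable) versions as integrands so that the stochastic integrals are well defined martingales, and one must retain the jump contributions $\sum_{u\le t}\Delta M_u^{(f)}\Delta M_u^{(g)}$ inside $[M^{(f)},M^{(g)}]_t$ rather than collapsing to the continuous bracket; upgrading the local martingales to true martingales via the assumed integrability is the only other delicate point, the remainder being the standard product formula.
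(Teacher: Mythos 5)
Your proof is correct and takes essentially the same route as the paper: both arguments rest on the semimartingale integration-by-parts (product) formula together with the observation that the continuous finite-variation parts contribute nothing to the covariation, so that $[f(x),g(x)]_t=[M^{(f)},M^{(g)}]_t$. The only difference is bookkeeping---you apply the product rule once, directly to $f_t(x_t)g_t(x_t)$, whereas the paper writes each factor as initial value plus martingale plus finite-variation part and applies the product rule to the pieces pairwise before recombining---and your insistence on reading the integrands as their predictable (left-limit) versions matches the paper's implicit convention.
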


\begin{proof} By definition,
$
M_{t}^{(f)}M_{t}^{(g)}=\int_{0}^{t}M_{u}^{(f)}dM_{u}^{(g)}+\int_{0}^{t}M_{u}^{(g)}dM_{u}^{(f)}+\left[M^{(f)},M^{(g)}\right]_{t}$. Setting $V^{(f)}_{t}=\int_{0}^{t}\L_u f_u(x_{u})du$ and
$V^{(g)}_{t}=\int_{0}^{t}\L_u g_u(x_{u})du$, one gets $
M_{t}^{(f)}V^{(g)}_{t}=\int_{0}^{t}\L_u
g_u(x_{u})M_{u}^{(f)}du+\int_{0}^{t}V^{(g)}_{u}dM_{u}^{(f)}$, and $
M_{t}^{(g)}V^{(f)}_{t}=\int_{0}^{t}\L_u
f_u(x_{u})M_{u}^{(g)}du+\int_{0}^{t}V^{(f)}_{u}dM_{u}^{(g)}$. Next, one obtains
\begin{eqnarray*}
M_{t}^{(f,g)}  & = & M_{t}^{(fg)}-\int_{0}^{t}g_u(x_{u})dM_{u}^{(f)}-\int_{0}^{t}f_u(x_{u})dM_{u}^{(g)}\\
&=& f_t(x_t)g_t(x_t) - f_0(x_0)g_0(x_0) -\int_0^t \L_u(f_u g_u)(x_u)du \\
&& \qquad -\int_{0}^{t}g_u(x_{u})dM_{u}^{(f)}-\int_{0}^{t}f_u(x_{u})dM_{u}^{(g)}\\
&=& \left(M_t^{(f)}+V_t^{(f)}\right)  \left(M_t^{(g)}+V_t^{(g)}\right) +  f_0(x_0)V_t^{(g)} +g_0(x_0)V_t^{(f)} \\
&& \qquad -\int_{0}^{t}\{g_u(x_{u})-g_0(x_0)\}dM_{u}^{(f)}-\int_{0}^{t}\{f_u(x_{u})-f_0(x_0)\}dM_{u}^{(g)}\\
&& \qquad \qquad -\int_0^t \L_u(f_u g_u)(x_u)du\\
&=& \left(M_t^{(f)}+V_t^{(f)}\right)  \left(M_t^{(g)}+V_t^{(g)}\right) +  f_0(x_0)V_t^{(g)} +g_0(x_0)V_t^{(f)} \\
&& \qquad -\int_{0}^{t} \left(M_u^{(g)}+V_u^{(g)}\right)dM_{u}^{(f)}-\int_{0}^{t} \left(M_u^{(f)}+V_u^{(f)}\right)dM_{u}^{(g)}\\
&& \qquad \qquad -\int_0^t \L_u(f_u g_u)(x_u)du\\
 & = & [M^{(f)},M^{(g)}]_{t} - \int_{0}^{t}\{\L_u(f_ug_u)-f_u\L_u g_u-g_u\L_u f_u\}(x_{u})du.
\end{eqnarray*}
Hence the result.
\end{proof}
Note that if $g(s) = s$, then for any smooth function $f$,
\begin{equation}
\left\{ \L_{i}(fg)-f\L_{i}g-g\L_{i}f\right\} (s,i)=D(s)a(i)D(s)\nabla f(s,i).\label{eq:crochetg}
\end{equation}

\section{Proof of the main results}\label{app:main}

\subsection{Proof of Lemma \ref{lem:changeofmeasure0}}\label{app:changeofmeasure0}
The multiplicative character of
$Z$ follows directly from the representation $Z_t = e^{-M_t - \frac{1}{2}\int_0^t \ell_{\tau_u}du}$,
$
M_{t}=\int_{0}^{t}\ell_{\tau_{u}}du+\int_{0}^{t}\rho(\tau_{u-})^{\top}\sigma(\tau_{u-})dW_{u}$,
since  $\tau$ and $(W,\tau)$ are Markov processes. Hence, for any $s,t\ge 0$,
$Z_{t+s} = Z_t Z_{t,s}$ where for a fixed $t$, $Z_{t,s}$ is independent of $\calf_t$ and has the same law as
$Z_s$. Next, from Lemma \ref{lem:gamma},
$Z^{(2)}$ is a positive martingale. Next, since $W$ and $\tau$ are independent and $\calf_t = \calf_t^W \vee \calf_t^\tau \subset \calf_t^W \vee \calf_T^\tau$, it follows that for any $t\in [0,T]$,
\begin{eqnarray*}
E[Z_T|\calf_t] &=& E\left[Z_T^{(1)}  Z_T^{(2)}|\calf_t\right]  = E\left[Z_T^{(2)} E\left[Z_T^{(1)} |\calf_t^W \vee \calf_T^\tau \right]|\calf_t\right]\\
&=& E\left[Z_t^{(1)} Z_T^{(2)}|\calf_t\right] = Z_t^{(1)}  E\left[ Z_T^{(2)}|\calf_t\right] = Z_t^{(1)}Z_t^{(2)}=Z_{t}\gamma_{\tau_{t}}(T-t),\\
\end{eqnarray*}
proving that $Z_t^{(1)}Z_t^{(2)}$ is a martingale for $t\in [0,T]$. Similarly,  from Lemma \ref{lem:gamma},
$$
E_{i}(Z_{t}) = E_i\left[ e^{-\int_{0}^{t}\ell_{\tau_{u}}du}E\left[ Z_t^{(1)} |\calf_t^\tau\right]\right]=E_i\left[ e^{-\int_{0}^{t}\ell_{\tau_{u}}du}\right]=
\gamma_{i}(t), \quad t\ge 0.
$$
Now take $\theta\in \R^d$, $f\in \R^l$, and let $0\le s\le t \le T$ be given. Then
\begin{eqnarray*}
E^{\tilde P}\left[ f(\tau_t) e^{\theta^\top (\tilde W_t-\tilde W_s)}|\calf_s\right] &=& E\left[ Z_t^{(1)} Z_t^{(2)} f(\tau_t) e^{\theta^\top (\tilde W_t -\tilde W_s)} |\calf_s\right]/(Z_s^{(1)} Z_s^{(2)})\\
&=& E\left[  Z_t^{(2)} f(\tau_t) E\left[ Z_t^{(1)}e^{\theta^\top (\tilde W_t-\tilde W_s)} |\calf_s^W \vee \calf_t^\tau\right]|\calf_s\right]/(Z_s^{(1)} Z_s^{(2)}).
\end{eqnarray*}
Now, set $h_u = \sigma_{\tau_u}^\top \rho_{\tau_u}$.  Novikov's condition is clearly satisfied for $h$ and one can check that
$$
E\left[ Z_t^{(1)}e^{\theta^\top \tilde W_t} |\calf_s^W \vee \calf_t^\tau\right] = e^{\|\theta\|^2 (t-s)} Z_s^{(1)}e^{\theta^\top \tilde W_s}.
$$
Hence
\begin{eqnarray*}
E^{\tilde P} \left[ f(\tau_t) e^{\theta^\top (\tilde W_t-\tilde W_s)} |\calf_s\right] &= & e^{\|\theta\|^2 (t-s)} E\left[  Z_t^{(2)} f(\tau_t) \right]/Z_s^{(2)}\\
&= & e^{\|\theta\|^2 (t-s)} E\left[ \gamma_{\tau_t}(T-t)   e^{-\int_s^t \ell_{\tau_u} du} f(\tau_t) \right]/\gamma_{\tau_s}(T-s)\\
&=& e^{\|\theta\|^2 (t-s)} E_{\tau_s} \left[ \gamma_{\tau_{t-s}}(T-t)   e^{-\int_0^{t-s} \ell_{\tau_u} du} f(\tau_{t-s}) \right]/\gamma_{\tau_s}(T-s).
\end{eqnarray*}
This proves that under $\tilde P$, $\tilde W$  is a Brownian motion independent of $\tau$, and consequently,
\begin{eqnarray*}
X_t &=& X_0 + \int_0^t D(X_u)m_{\tau_u}du +\int_0^t D(X_u)\sigma_{\tau_{u-}}dW_u\\
&=& X_0 + \int_0^t D(X_u)\left( m_{\tau_u}- \sigma_{\tau_u}\sigma_{\tau_u}^\top \rho_{\tau_u}\right)du +\int_0^t D(X_u)\sigma_{\tau_{u-}}d\tilde W_u\\
&=& X_0 +\int_0^t D(X_u)\sigma_{\tau_{u-}}d\tilde W_u,
\end{eqnarray*}
proving that $X$ is a $\tilde P$-martingale. Similarly, $
S_t = S_0 + \int_0^t r_{\tau_u} S_u du  +\int_0^t D(S_u)\sigma_{\tau_{u-}}d\tilde W_u$.
Finally, if $0\le t\le t+h\le T$, then
\begin{eqnarray*}
E^{\tilde P} \left[ f(\tau_{t+h}) |\calf_t\right] &=& E_{\tau_t} \left[ \gamma_{\tau_{h}}(T-t-h)   e^{-\int_0^{h} \ell_{\tau_u} du} f(\tau_{h}) \right]/\gamma_{\tau_t}(T-t)\\
  &=& E_{\tau_t} \left[ \gamma_{\tau_{h}}(T-t)   e^{-\int_0^{h} \ell_{\tau_u} du} f(\tau_{h}) \right]/\gamma_{\tau_t}(T-t)\\
  && \quad
-h E_{\tau_t} \left[ \dot \gamma_{\tau_{h}}(T-t)   e^{-\int_0^{h} \ell_{\tau_u} du} f(\tau_{h}) \right]/\gamma_{\tau_t}(T-t)+o(h).
\end{eqnarray*}

It follows from  Theorem \ref{thm:FK} that if $v_h(i) =  E_{i} \left[ \gamma_{\tau_{h}}(T-t)   e^{-\int_0^{h} \ell_{\tau_u} du} f(\tau_{h}) \right]$, then
\begin{eqnarray*}
v_h(i) &=&  \gamma_i(T-t)f(i) + h \sum_{j=1}^l \Lambda_{ij} \gamma_j(T-t) f(j)- h \ell_{i} \gamma_i(T-t)f(i) +o(h) \\
&=&  \gamma_i(T-t)f(i) + h \gamma_i(T-t) \sum_{j=1}^l \left({\tilde \Lambda}_{T-t}\right)_{ij} f(j) \\
&& \qquad +h \gamma_i(T-t)f(i) \left\{ \Lambda_{ii}-\left({\tilde \Lambda}_{T-t}\right)_{ii}  -\ell_{i}  \right\} +o(h) \\
&=&  \gamma_i(T-t)f(i) + h \gamma_i(T-t) \sum_{j=1}^l \left({\tilde \Lambda}_{T-t}\right)_{ij} f(j) +h \dot \gamma_i(T-t)f(i)+ o(h),
\end{eqnarray*}
using $\dot{\gamma}_{i}(t)=-\ell_{i}\gamma_{i}(t)+\Lambda_{ii}\gamma_{i}(t)-(\tilde{\Lambda}_{t})_{ii}\gamma_{i}(t)$, from \eqref{eq:lambdat}. Therefore, if $\tau_t=i$, then
$$
E^{\tilde P} \left[ f(\tau_{t+h}) |\calf_t\right]= f(i) +h  {\tilde \Lambda}_{T-t} f (i)+o(h),
$$
showing that under $\tilde P$, $\tau$ is a Markov chain with generator ${\tilde \Lambda}_{T-t}$. \qed

\subsection{Proof of Lemma \ref{lem:changeofmeasure}}\label{app:changeofmeasure}
To prove that $\tilde P$ is variance-optimal, set
$b_t = D^{-1}(X_t)\rho(\tau_{t-})$, so that $M_t = \int_0^t b_u^\top dX_u$. It is then easy to check that $\bar b = b Z$ is such that $\int_0^T \bar b_u ^\top dX_u$ is square integrable, and since $X$ is a $\tilde P$-martingale, then for any bounded stopping times $U,V$ with $0\le U\le V\le T$, and any bounded $\calf_U$-measurable variable $H$, one gets $E\left[Z_T H(X_V-X_U)\right] =0$, proving that $b$ is an adjustment process, as defined in  \citet{Schweizer:1996}. It follows from \citet[Proposition 8]{Schweizer:1996} that $\tilde P$ is variance-optimal.
\qed

\subsection{Proof of Lemma \ref{lem:checkP}}\label{app:checkP}
If $f(s,i)$ is twice continuously differentiable with respect
to $s$ and $(S_{t},{\tau}_{t})=(s,i)$,  then
\begin{eqnarray*}
E^{\check{P}}\left[ f(S_{t+h},\tau_{t+h})|\calf_{t}\right]  & = & \frac{\beta_{i}(T-t)}{s}E^{\inw P}\left[ S_{T}f(S_{t+h},\tau_{t+h})|\calf_{t}\right] \\
 & = & \frac{\beta_{i}(T-t)}{s}\frac{E^{\tilde{P}}\left[ B_{T}S_{T}f(S_{t+h},\tau_{t+h})|\calf_{t}\right] }{E^{\tilde{P}}\left[ B_{T}|\calf_{t}\right] }\\
 & = & \frac{1}{s B_t}E^{\tilde{P}}\left[ X_{T}f(S_{t+h},\tau_{t+h})|\calf_{t}\right] = \frac{1}{sB_{t}}E^{\tilde{P}}\left[ X_{t+h}f(S_{t+h},\tau_{t+h})|\calf_{t}\right] .
\end{eqnarray*}
Then, with $g$ being the identity function, if  $(S_{t},{\tau}_{t})=(s,i)$, it follows that
\begin{eqnarray*}
\lim_{h\downarrow0}\frac{1}{h}\left[E^{\check{P}}\left[ f(S_{t+h},\tau_{t+h})|\calf_{t}\right] -f(s,i)\right] & = & -r_{i}f(s,i)+\frac{\tilde{\cH}_{t}(fg)(s,i)}{s} =\check{\mathcal{H}}_{t}f(s,i).
\end{eqnarray*}
\qed

\subsection{Proof of Lemma \ref{lem:alpha}}

\label{app:alpha} 

According to \citet{Protter:2004}{[}Theorem V.7{]}, the solution
 of
$
V_{t}=C_0(s,i)+\int_{0}^{t}\alpha_u(S_{u},\tau_{u-})^{\top}dX_{u}-\int_{0}^{t}V_{u-}dM_{u}
$
exists and is uniquely determined by $M$ and $S$. See also Remark
\ref{rem:Vrep} below.
Next,  $\phi_{t}=\alpha_t(S_{t-},\tau_{t-})-V_{t-}D^{-1}(X_{t-})\rho(\tau_{t-})$, so
$\phi$ is predictable and $V_{t}=C_0(s,i)+\int_{0}^{t}\phi_{u}^{\top}dX_{u}$,
by definition of $M$.  \qed

\begin{RM}\label{rem:Vrep} Since the martingale $M$ is continuous,
\citet{Protter:2004}{[}Theorem V.52{]} yields the
representation  $ V_{t}=Z_{t}\left\{
H_{0}+\int_{0+}^{t}Z_{u}^{-1}d(H_{u}+[H,M]_{u})\right\}$,  where
\begin{eqnarray*}
H_{t} & = & C_{0}(s,i)+\int_{0}^{t}\alpha_u(S_{u},\tau_{u-})^{\top}dX_{u}\\
 & = & C_{0}(s,i)+\int_{0}^{t}\alpha_u(S_{u},\tau_{u-})^{\top}D(X_{u})m(\tau_{u})du\\
 &  & \quad+\int_{0}^{t}\alpha_u(S_{u},\tau_{u-})^{\top}D(X_{u})\sigma(\tau_{u-})dW_{u},
\end{eqnarray*}
and $[H,M]_{t}=\int_{0}^{t}\alpha_u(S_{u},\tau_{u})^{\top}D(X_{u})m(\tau_{u})du$.
As a result,
\begin{eqnarray}
\frac{V_{t}}{Z_t} & = &
C_{0}+2\int_{0}^{t}\frac{\alpha_u(S_{u},\tau_{u})^{\top}D(X_{u})m(\tau_{u})}{Z_{u}}du\label{eq:Vrep2}
+\int_{0}^{t}\frac{\alpha_u(S_{u},\tau_{u})^{\top}D(X_{u})\sigma(\tau_{u})}{Z_{u}}dW_{u}.\nonumber
\end{eqnarray}

\end{RM}


\subsection{Proof of Lemma \ref{lem:Grep}}

\label{app:pfGrep} 
Let $M^{(C)}$ and $M^{(g)}$ be the martingales  defined
by $ M_{t}^{(C)}=
C_{t}(S_{t},\tau_{t})-C_{0}(s,i)-\int_{0}^{t}\left\{\partial_u C_u(S_{u},\tau_{u})+\mathcal{H}C_{u}(S_{u},\tau_{u})\right\}du$
and $ M_{t}^{(g)} =
S_{t}-s-\int_{0}^{t}D(S_{u})\mu(\tau_{u})du=\int_{0}^{t}D(S_{u})\sigma(\tau_{u-})dW_{u}$, with $g(s) = s$ on $[0,\infty)^d$.
As a result, $
X_{t}=s+\int_{0}^{t}D(X_{u})m(\tau_{u})du+\int_{0}^{t}B_{u}dM_{u}^{(g)}
$,
\begin{eqnarray*}
V_{t} & = & C_0(s,i)+\int_{0}^{t}B_{u}\phi_{u}(S_{u-},\tau_{u-})^{\top}dM_{u}^{(g)}\\
 &  & \quad+\int_{0}^{t}B_{u}m(\tau_{u})^{\top}D(S_{u})\phi_{u}(S_{u},\tau_{u})du\\
 & = & C_0(s,i)+\int_{0}^{t}B_{u}\phi_{u}(S_{u-},\tau_{u-})^{\top}dM_{u}^{(g)}\\
 &  & \quad+\int_{0}^{t}B_{u}m(\tau_{u})^{\top}D(S_{u})\nabla
 C_{u}(S_{u},\tau_{u})du +\int_{0}^{t}G_{u}\ell_{\tau_{u}}du
\end{eqnarray*}
\begin{eqnarray*}
B_{t}C_t(S_{t},\tau_{t}) & = & C_0(s,i)+\int_{0}^{t}B_{u}\left\{ \partial_{u}C_(S_{u},\tau_u)-rC_u(S_{u},\tau_u)\right\} du\\
 &  & \quad+\int_{0}^{t}B_{u}\cH C_u(S_{u},\tau_{u})du+\int_{0}^{t}B_{u}dM_{u}^{(C)}.
\end{eqnarray*}
Therefore,  using \eqref{eq:odegammanew} and using \eqref{eq:phidef2} one obtains that
\begin{eqnarray}
G_{t} & = & -\int_{0}^{t}B_{u}{\tilde{\mathcal{H}}}_{T-u}C_{u}(S_{u},\tau_{u})du+\int_{0}^{t}B_{u}\mathcal{H}C_{u}(S_{u},\tau_{u})du+\int_{0}^{t}B_{u}dM_{u}^{(C)} \nonumber\\
 &  & \quad-\int_{0}^{t}B_{u}m(\tau_{u})^{\top}D(S_{u})\nabla C_{u}(S_{u},\tau_{u})du-\int_{0}^{t}B_{u}\phi_{u}(S_{u},\tau_{u})^{\top}dM_{u}^{(g)}\nonumber\\
 &  & \qquad-\int_{0}^{t}\ell_{\tau_{u}}G_udu\nonumber\\
 & = & \int_{0}^{t}B_{u}dM_{u}^{(C)}-\int_{0}^{t}B_{u}\phi_{u}(S_{u},\tau_{u})^{\top}dM_{u}^{(g)}-\int_{0}^{t}\ell_{\tau_{u}}G_{u}du\nonumber\\
 &  & \quad +\int_{0}^{t}B_{u}\left\{ (\Lambda-\tilde{\Lambda}_{T-u})C_{u,S_{u}}\right\} (\tau_{u})du.\label{eq:G2}
\end{eqnarray}
%
Finally, it follows from Ito's formula  that
\begin{eqnarray*}
M_{t}^{(C)} & = & \int_{0}^{t}\nabla_{s}C_u(S_{u},\tau_{u})^{\top}D(S_{u})\sigma(\tau_{u-})dW_{u}\\
 &  & \qquad+\sum_{0<u\le t}\Delta C_u(S_{u},\tau_{u})-\int_{0}^{t}\Lambda C_u(S_{u},\tau_{u})du.
\end{eqnarray*}
Replacing in the above formula yields the desired result. \qed

\subsection{Proof of Lemma \ref{lem:GSmart}}
\label{app:pfGSmart}
Let $g(s)=s$  and let $M^{(g)}$ be as defined in the proof of Lemma \ref{lem:Grep}.  Then, using \eqref{eq:G2} and It\^{o}'s formula, one gets, for any smooth $f$,
\begin{eqnarray*}
G_{t}f_{t}(S_{t},\tau_{t}) & = & \int_{0}^{t}f_{u}(S_{u-},\tau_{u-})dG_{u}+\int_{0}^{t}\partial_{u}f_{u}(S_{u},\tau_{u})G_{u}du\\
 &  & \quad+\int_{0}^{t}\mathcal{H}f_{u}(S_{u},\tau_{u})G_{u}du+\int_{0}^{t}G_{u-}dM_{u}^{(f)}+\left[G,M^{(f)}\right]_{t}\\
 & = & \int_{0}^{t}B_{u}f_{u}(S_{u-},\tau_{u-})dM_{u}^{(C)}+\int_{0}^{t}G_{u-}dM_{u}^{(f)}+\left[G,M^{(f)}\right]_{t}\\
 &  & \quad-\int_{0}^{t}B_{u}f_{u}(S_{u-},\tau_{u-})\phi_{u}^{\top}dM_{u}^{(g)}\\
 &  & \qquad+\int_{0}^{t}B_{u}f_{u}(S_{u},\tau_{u})\left\{ (\Lambda-\tilde{\Lambda}_{T-u})C_{u,S_{u}}\right\} (\tau_{u})du\\
 &  & \qquad+\int_{0}^{t}\left\{ \mathcal{H}f_{u}(S_{u},\tau_{u})+\partial_{u}f_{u}(S_{u},\tau_{u})-f_{u}(S_{u},\tau_{u})\ell_{\tau_{u}}\right\} G_{u}du,
\end{eqnarray*}
where
$
M_{t}^{(f)}=f_{t}(S_{t},\tau_{t})-f_{0}(s,i)-\int_{0}^{t}\partial_{u}f_{u}(S_{u},\tau_{u})du-\int_{0}^{t}\mathcal{H}f_{u}(S_{u},\tau_{u})du
$
is a martingale and where, by  \eqref{eq:G2} and Lemma \ref{lem:crochet}, one has
\begin{eqnarray*}
\left[G,M^{(f)}\right]_{t} & = &
\int_{0}^{t}B_{u}d\left[M^{(C)},M^{(f)}\right]_{u} -\int_{0}^{t}B_{u}\phi_{u}^{\top}d\left[M^{(g)},M^{(f)}\right]_{u}\\
 & = & \int_{0}^{t}B_{u}dM_{u}^{(C,f)}-\int_{0}^{t}B_{u}\phi_{u}^{\top}dM_{u}^{(f,g)}\\
 &  & \quad+\int_{0}^{t}B_{u}\{\mathcal{H}(f_{u}C_{u})-f_{u}\mathcal{H}(C_{u})-C_{u}\mathcal{H}(f_{u})\}(S_{u},\tau_{u})du\\
 &  & \qquad-\int_{0}^{t}B_{u}\phi_{u}^{\top}\{\mathcal{H}(f_{u}g)-f_{u}\mathcal{H}(g)-g\mathcal{H}(f_{u})\}(S_{u},\tau_{u})du.
\end{eqnarray*}
As a result, if $f_{t}(s,i)=\gamma_{T-t}(i)$, then
$\mathcal{H}f_{u}(s,i)+\partial_{u}f_{u}(s,i)-f_{u}(s,i)\ell_{i}\equiv0$,
and $
\mathcal{N}(f_{u},g)(s,i)=\{\mathcal{H}(f_{u}g)-f_{u}\mathcal{H}(g)-g\mathcal{H}(f_{u})\}(s,i)\equiv0
$,
using \eqref{eq:crochetg}.
Moreover
\begin{eqnarray*}
\mathcal{N}(f_{u},C_{u})(s,i) & = & \left\{ \mathcal{H}(f_{u}C_{u})-f_{u}\mathcal{H}(C_{u})-C_{u}\mathcal{H}(f_{u})\right\} (s,i)\\
 & = & -\gamma_{T-u}(i)\Lambda C_{u,s}(i)-C_{u,s}(i)\Lambda\gamma_{T-u}(i)+\Lambda(\gamma_{T-u}C_{u,s})(i)\\
 & = & -f_{u}(s,i)\left\{ (\Lambda-\tilde{\Lambda}_{T-u})C_{u,s}\right\} (i),
\end{eqnarray*}
since $\Lambda(\gamma_{t}h)(i)-\gamma_{t}(i)\Lambda
h(i)-h(i)\Lambda\gamma_{t}(i)=\gamma_{t}(i)(\tilde{\Lambda}_{t}-\Lambda)h(i)$,
$i\in\{1,\ldots,l\}$. Hence $G_{t}\gamma_{\tau_{t}}(T-t)$ is a
martingale with initial value $0$ and terminal value $G_{T}$, since
$\gamma(0)=\mathbf{1}$. Thus, $E(G_{T})=0$. Next, take
$f_{t}(s,i)=s_{k}\gamma_{T-t}(i)$, for a given $k\in\{1,\ldots,d\}$.
Then
$$
\mathcal{N}(f_{u},g)(s,i) = \{\mathcal{H}(f_{u}g)-f_{u}\mathcal{H}(g)-g\mathcal{H}(f_{u})\}(s,i)=f_{u}(s,i)D(s)a(i)e_{k}
$$
by \eqref{eq:crochetg}, with $(e_{k})_{j}=I_{jk}$. Also,
$\mathcal{H}f_{u}(s,i)+\partial_{u}f_{u}(s,i)-f_{u}(s,i)\ell_{i}=f_{u}(s,i)\mu_{k}(i)$
and
\begin{eqnarray*}
\mathcal{N}(f_{u},C_{u})(s,i) & = & \left\{ \mathcal{H}(f_{u}C_{u})-f_{u}\mathcal{H}(C_{u})-C_{u}\mathcal{H}(f_{u})\right\} (s,i)\\
 & = & f_{u}(s,i)e_{k}^{\top}a(i)D(s)\nabla C_{u}(s,i)-f_{u}(s,i)\left\{ (\Lambda-\tilde{\Lambda}_{T-u})C_{u,s}\right\} (i),
\end{eqnarray*}
using the previous calculations. It follows that $R_t - A_t$ is a martingale, where
\begin{multline*}
A_t = \int_0^t B_u \phi_u ^\top \left\{\mathcal{N}(f_u,C_u)-\mathcal{N}(f_u,g) \right\}(S_u,\tau_u) du \\
+\int_{0}^{t}B_{u}f_{u}(S_{u},\tau_{u})\left\{ (\Lambda-\tilde{\Lambda}_{T-u})C_{u,S_{u}}\right\} (\tau_{u})du\\
+\int_{0}^{t}\left\{ \mathcal{H}f_{u}(S_{u},\tau_{u})+\partial_{u}f_{u}(S_{u},\tau_{u})-f_{u}(S_{u},\tau_{u})\ell_{\tau_{u}}\right\} G_{u}du.
\end{multline*}
Using \eqref{eq:phidef2} and setting $R_{t}=f_{t}(S_{t},\tau_{t})G_{t}$, one gets that
\begin{eqnarray*}
A_t &=& \int_0^t \left[R_u \{ m_k(\tau_u) + r_{\tau_u}\} + f_u(S_u,\tau_u) e_k^\top a(\tau_u)D(X_u)\{\nabla C_u(S_u,\tau_u)-\phi_u)\}\right]du \\
&=& \int_0^t \left[R_u \{ m_k(\tau_u) + r_{\tau_u}\} - R_u e_k^\top a(\tau_u)\rho(\tau_u)\right]du = \int_0^t r_{\tau_u}R_u du,
\end{eqnarray*}
proving that  $R_{t}-\int_{0}^{t} r_{\tau_u} R_{u}du$ is a martingale. Hence, so is $B_{t}R_{t}=X_{t}G_{t}\gamma_{T-t}(\tau_{t})$.
Finally, to prove \eqref{eq:optcont}, note that $E(G_{T}|\calf_{t})=\gamma_{\tau_{t}}(T-t)G_{t}$. Thus,
for any stopping times $U,V$ with $0\le U\le V\le T$,
\begin{eqnarray*}
E\{G_{T}(X_{V}-X_{U})|\calf_{u}\} & = & E\{(X_{V}E(G_{T}|\calf_{V})|\calf_{U}\}-X_{U}E(G_{T}|\calf_{U})\\
 & = & E\{G_{V}\gamma_{\tau_{V}}(T-V)X_{V}-G_{U}\gamma_{\tau_{U}}(T-U)X_{U}|\calf_{U}\}=0,
\end{eqnarray*}
since we just proved that $X_{t}G_{t}\gamma_{\tau_{t}}(T-t)$ is a
martingale. Hence the result.\qed


\subsection{Proof of Theorem \ref{thm:optcont}}

\label{pf:optcont} 
It follows from \eqref{eq:optcont} that for any stopping times
$U,V$, with $0\le U\le V\le T$, and any bounded random variable $Y$
that is $\calf_{U}$-measurable, $
E\left(G_{T}\int_{0}^{T}\psi_{t}^{\top}dX_{t}\right)=0$, where
$\psi$ is the predictable process given by $\psi_{t}=Y1_{(U,V]}(t)$.
Therefore, using properties of stochastic integrals, one may
conclude that for any admissible strategy $\psi$, one gets $
E\left(G_{T}\int_{0}^{T}\psi_{t}^{\top}dX_{t}\right)=0$. Since
$E\left\{ G_{T}\int_{0}^{T}(\phi_{t}-\psi_{t})^{\top}dX_{t}\right\}
=0$ and $E(G_{T})=0$, one has, for any $(\pi,\psi) \in \mathbb{R}\times\mathcal{A}$,
$$
\mathit{HE}(\pi,\psi) =  \mathit{HE}(C_0,\phi) + E\left[\left\{
C_{0}(s,i)-\pi+\int_{0}^{T}(\phi_{t}-\psi_{t})^{\top}dX_{t}\right\}
^{2}\right]\ge \mathit{HE}(C_0,\phi).
$$
This proves that at least one solution exists. Suppose now that $\mathit{HE}(\pi,\psi) = \mathit{HE}(C_0,\phi)$ for some $(\pi,\psi)\in \R\times \A$. Then
$C_{0}(s,i)-\pi+\int_{0}^{T}(\phi_{t}-\psi_{t})^{\top}dX_{t} =0 $ $P$ almost surely. Since $\tilde P$ is equivalent to $P$ and $X$ is a $\tilde P$-martingale, it follows that $N_t =
\int_{0}^{t}(\phi_{s}-\psi_{s})^{\top}dX_{s}$ is a $\tilde P$-martingale. Hence its expectation is $0$, implying that so $C_0=\pi$. It then follows that $N_t \equiv 0$ $P$-a.s. Finally, the $\tilde P$-martingale
$N$ has quadratic variation $[N,N]_t = \int_0^t \|\psi_s-\phi_s\|^2 \ell_{\tau_s} ds$ for all $t\in [0,T]$. Now, $N_t^2-[N,N]_t$ is a martingale having expectation $0$. Since $N_T=0$ $\tilde P$-a.s., it follows that $[N,N]_T = 0$ $\tilde P$-a.s., proving that for all almost every $t \in [0,T]$,  $\psi_t = \phi_t$ a.s. under $P$ and $\tilde P$. This proves the uniqueness.
\qed

\section{Laplace transforms of $\log(S_T)$ under $\inw P$ and $\check P$}\label{app:Laplace}

One can compute the Laplace transform or the characteristic function
of the conditional distribution of $\log\left(S_{T}/s\right)$ given
$S_{t}=s,{\tau}_{t}=i$ under $\inw P$ in the following way. Note that for any $0\le t\le T$,
and for any $\theta\in\mathbb{C}$,
\begin{equation}
\inw{\Psi}_{T-t}(i,\theta)=E_{s,i}^{\inw P}\left\{ \left.e^{\theta\log\left(S_{T}/S_{t}\right)}\right|\calf_{t}\right\} =\frac{h_{i}\left\{ T-t,\theta-1,\frac{\theta(\theta-1)}{2}\right\} }{\delta_{i}(T-t)},\label{eq:cf}
\end{equation}
where, according to the Feynman-Kac formula in Theorem \ref{thm:FK},  for any $i\in\{1,\ldots,l\}$ and any $t>0 $,  $h_{i}(t,\theta_{1},\theta_{2})=E_{i}\left\{ e^{\int_{0}^{t}\{\theta_{1}r_{\tau_{u}}+\theta_{2}a_{\tau_{u}}-\ell_{\tau_{u}}\}du}\right\} $
solves 
\begin{equation}
\dot h_i(t) = \frac{d}{dt}h_{i}(t)=(\theta_{1}r_{i}+\theta_{2}a_{i}-\ell_{i})h_{i}(t)+\sum_{j=1}^{l}\Lambda_{ij}h_{j}(t),\quad h_{i}(0)=1.\label{eq:laplace-sde}
\end{equation}
Then $h(t,\theta_{1},\theta_{2})=e^{t\{D(\theta_{1}r+\theta_{2}a-\ell)+\Lambda\}}\mathbf{1}$, $h_{i}(t,0,0)=\gamma_{i}(t)$, and $h_{i}(t,-1,0)=\beta_{i}(t)\gamma_{i}(t)$.
Similarly,
$\check{\Psi}_{T-t}(i,\theta)=E_{i}^{\check P}\left\{ \left.e^{\theta\log\left(S_{T}/S_{t}\right)}\right|\calf_{t}\right\} =\frac{h_{i}\left\{ T-t,\theta,\frac{\theta(\theta+1)}{2}\right\} }{\gamma_{i}(T-t)}$.

\end{document}